\newtheorem{theorem}{Theorem}[section]
\newtheorem{lemma}[theorem]{Lemma}
\theoremstyle{definition}
\newtheorem{rmk}[theorem]{Remark}
\newtheorem{defn}[theorem]{Definition}
\DeclareMathOperator{\re}{Re}	
\newcommand*{\C}{{\mathbb{C}}}     
\newcommand*{\R}{{\mathbb{R}}}     
\newcommand*{\inv}{^{-1}}
\newcommand*{\norm}[1]{\lVert#1\rVert}
\newcommand{\gl}{\lambda}
\newcommand{\tf}{\mathbb{H}}
\newcommand{\Lin}{\mathcal{L}}
\newcommand{\XTMD}{X}
\newcommand{\Xhd}{X_{\text{hd}}}
\newcommand{\Ahd}{A_{\text{hd}}}
\newcommand{\Bhd}{B_{\text{hd}}}
\newcommand{\iprod}[2]{\langle#1,#2\rangle}
\title[Polynomial stability of wind turbine tower models]{Polynomial stability of wind turbine tower models}
\begin{document}

\author[M.\ Fkirine]{Mohamed Fkirine}
\address[M.\ Fkirine]{Mathematics Research Centre, Tampere University, P.O.~ Box 692, 33101 Tampere, Finland}
\email{mohamed.fkirine@tuni.fi}

\author[L.~Paunonen]{Lassi Paunonen}
\address[L.~Paunonen]{Mathematics Research Centre, Tampere University, P.O.~ Box 692, 33101 Tampere, Finland}
 \email{lassi.paunonen@tuni.fi}
 \thanks{This work was supported by the Research Council of Finland Grant number 349002.}

\subjclass[2020]{%
35B35, 35B40,	93C20,  93D15,
47D06 
(47A10)
}

\keywords{Wind turbine tower, SCOLE model, 
polynomial stability,
coupled systems, 
 energy decay}
 
\begin{abstract}
	We investigate the stabilization of mathematical models describing the structural dynamics of monopile wind turbine towers. 
	In the fore-aft plane, we show that the system becomes polynomially stable with an energy decay rate of $t^{-1}$ under static output feedback that relies on the velocity and/or angular velocity of the nacelle.
	Additionally, we prove that a tuned mass damper (TMD) in the nacelle ensures polynomial stability with the same energy decay rate, offering a viable alternative to active control.
	For the side-to-side plane, we analyze a model incorporating a hydraulic power transmission system and prove that feedback from the nacelle’s angular velocity and the generator load torque leads to polynomial stability of the system.
	\end{abstract}

\maketitle

\section{Introduction}
Consider a wind turbine tower anchored to the ocean floor, bearing at its peak the nacelle, which can weigh several hundred tons.
Because of its mass, the nacelle interacts with the flexible tower structure, creating complex dynamic behavior that requires a detailed model for accurate analysis. 
Wind turbines 
are 
subject to
 various types of vibrations due to environmental forces such as wind and waves. 
Among these, fore-aft vibrations (oscillations along the wind direction) and side-side vibrations (oscillations perpendicular to the wind direction)
 are particularly critical in evaluating the structural dynamics of the tower. These vibrations influence the turbine’s stability, performance, and lifespan. 

In this paper we investigate selected linear models that describe the dynamic behavior of wind turbine towers. 
In particular, we prove the strong asymptotic stability of the strongly continuous semigroup associated with each model, 
and derive an explicit decay rate for the energy of the classical solutions as $t\to\infty$. Our results are based on derivation of 
resolvent estimates for the generators of the strongly continuous semigroups associated with the wind turbine tower models.
This analysis can be used as a basis for control design for reducing vibrations in the
mathematical models.

Zhao and Weiss \cite{ZhaWei2011suppression,ZhaWei2011well,ZhaWei2014} used the NASA Spacecraft Control Laboratory Experiment (SCOLE) system as an analog to model the monopile wind turbine tower dynamics in both the fore-aft and side-to-side planes.
 In the side-to-side plane, the SCOLE system incorporates an ordinary differential equation (ODE) to represent the dynamics of the drivetrain, which transmits mechanical power from the rotor to the generator. 
 Originally designed to model large-scale space structures, the SCOLE system consists of a long elastic beam that is either clamped at one end and connected to a movable rigid body at the other or fully clamped at both ends, as detailed in \cite{LitMar88,LitMar88exact}. This analogy provides a valuable tool for simplifying the analysis of wind turbine tower dynamics. 
 Stability 
of the model can be enhanced through force and torque controls.
 Force control
can be implemented by adjusting
 the pitch angle to regulate aerodynamic torque and rotational speed, whereas torque control utilizes an electrically driven mass within the nacelle to indirectly influence the turbine’s speed.

The mathematical model, based on the SCOLE model, for the monopile wind turbine tower $\Sigma_{\text{sc}}$ in the fore-aft plane has the form
\begin{subequations}
\begin{align}
	&\rho(x) w_{tt}(x,t)=-(EI(x)w_{xx}(x,t))_{xx}, \quad(x,t)\in (0,1)\times [0,\infty),\label{01}\\
		&w(0,t)=0, \quad w_x(0,t)=0,\label{02}\\
		&mw_{tt}(1,t)-(EIw_{xx})_x(1,t)=u_1(t),\label{03}\\
		&Jw_{xtt}(1,t)+EI(1)w_{xx}(1,t)=u_2(t),\label{04}
\end{align}
\end{subequations}
where the subscripts $t$ and $x$ denote derivatives with respect to the time $t$ and the position $x$.
Equation \eqref{01} is an
 Euler-Bernoulli beam equation which 
models the dynamics of the flexible tower whose one end is clamped to the ground due to the boundary condition~\eqref{02}.
Equations \eqref{03}--\eqref{04} are the Newton-Euler rigid body equations that describe how the wind turbine 
 interacts with the 
nacelle.  
In particular, $(EIw_{xx})_x(1,t)$ and $-EI(1)w_{xx}(1,t)$ are the force and the torque exerted 
on the nacelle by the tower at time $t$.
The functions $u_1$ and $u_2$ are the open-loop control inputs that are applied to the nacelle. For simplicity of exposition we shall take 
the tower's height to be $1$. The parameters $EI$ and $\rho$ are the flexural rigidity function  and the mass density function  of the tower,
 while the transverse displacement of the wind tower at position $x$ and time $t$ is $w(x,t)$. We assume that $\rho$ and $EI$ belong
  to $C^4[0,1]$, and  are strictly positive. The parameters $m>0$ and $J>0$ are the mass and the moment of inertia of the nacelle, respectively.

	Exponential stability is typically the most desirable form of stability for controlled PDE systems.
	In \cite{Rao95,Guo2002}, it has been shown that exponential stabilization of $\Sigma_{\text{sc}}$ can be achieved using higher-order output feedback, such as $w_{xxt}(1,t)$ 
	 and $w_{xxxt}(1,t)$. However, implementing such feedback is challenging in practical applications. 
	Consequently, velocity $w_t(1,t)$ and angular velocity $w_{xt}(1,t)$ emerge as more feasible signals for
	 stabilizing the model $\Sigma_{\text{sc}}$. Unfortunately, as demonstrated in \cite{LitMar88,Rao95}, 
	 the system $\Sigma_{\text{sc}}$ cannot be stabilised exponentially using any feedback law of the form
	\begin{align*}
    u_i(t)=-\delta_{{i1}}w(1,t)-\delta_{i2}w_x(1,t)-\nu_{i1}w_t(1,t)-\nu_{i2}w_{xt}(1,t),
\end{align*}
where $\delta_{ij}$ and $\nu_{ij}$ are constants for $i,j=1,2$. Given 
this limitation, aiming for strong stability becomes a more practical and achievable objective. 
The strong stabilization of the uniform model (i.e., where $EI$ and $\rho$ are constant) in one specific case was 
studied
 in \cite{LitMar88}. 
The non-uniform case was addressed by Zhao and Weiss \cite{ZhaWei2011suppression}, who demonstrated that $\Sigma_{\text{sc}}$ can be strongly stabilized 
with
 either
 torque 
 or force control and by
 utilizing feedback from the velocity or the angular velocity of the nacelle.

 As the first main result of this paper we show that the system $\Sigma_{\text{sc}}$ also becomes \emph{polynomially stable} under either torque control, force control, or a combination of the two.
 More precisely, we show that the energy $E_{\text{sc}}(t)$ of every 
 classical state trajectory of the system $\Sigma_{\text{sc}}$ decays to zero at the rate $t^{-1}$ as $t\to\infty$.
 In addition, the energy of every mild state trajectory satisfies $E_{\text{sc}}(t)\to 0$ as $t\to\infty$.
 These results are presented in Theorems \ref{SCOLEdecay}, \ref{thm:torque}, and \ref{thm:force}.

As our results demonstrate, force and torque control are effective in strong and polynomial stabilization of wind turbine tower models. However, in practice they may reduce the power generation efficiency and increase the blade pitch actuator usage, which can lead to fatigue. 
One way to address these challenges is
to use
 passive structural control devices, 
 the most common being the tuned mass damper 
(TMD).
 In \cite{ZhaWei2017,TonZha2017}, the authors incorporated a TMD control system into the wind turbine tower system $\Sigma_{\text {sc}}$ and showed that 
the resulting closed-loop system is strongly stable.
    As the second main result of this paper, presented in Theorem \ref{thm:TMD}, we prove that the TMD stabilizes the monopile wind turbine model $\Sigma_{\text{sc}}$ polynomially.
More precisely, the energy of
 every classical solution of $\Sigma_{\text{TMD}}$ 
 decays to zero at the rate $t^{-1}$ as $t \to \infty$.
 
   In the side-side plane, we must consider the effect of the vibration torque transferred from the turbine blades to the nacelle through the dynamic
   mechanical transmission system. We study a model which incorporates the dynamics of a hydrostatic transmission system within the nacelle
    \cite{RajHsu2014, FarEla2018}.  The hydraulic pump converts mechanical rotational energy into hydraulic energy by 
      forcing hydraulic fluid (usually oil or another fluid) through a closed-loop system at high pressure, see Figure \ref{figure1}. This high-pressure fluid is then transported through 
      hydraulic lines to a hydraulic motor. At the receiving end, the hydraulic motor converts the energy stored in the pressurized fluid back into mechanical 
      rotational energy, which drives a generator to produce electricity. 
We note that an alternative model for the dynamic behaviour in the side-side plane involves 
 a two-mass drive train and a gearbox~\cite{ZhaWei2011suppression,ZhaWei2014}.
      \begin{figure}[htbp]
        \centering
        \label{figure1}\includegraphics[scale=0.4]{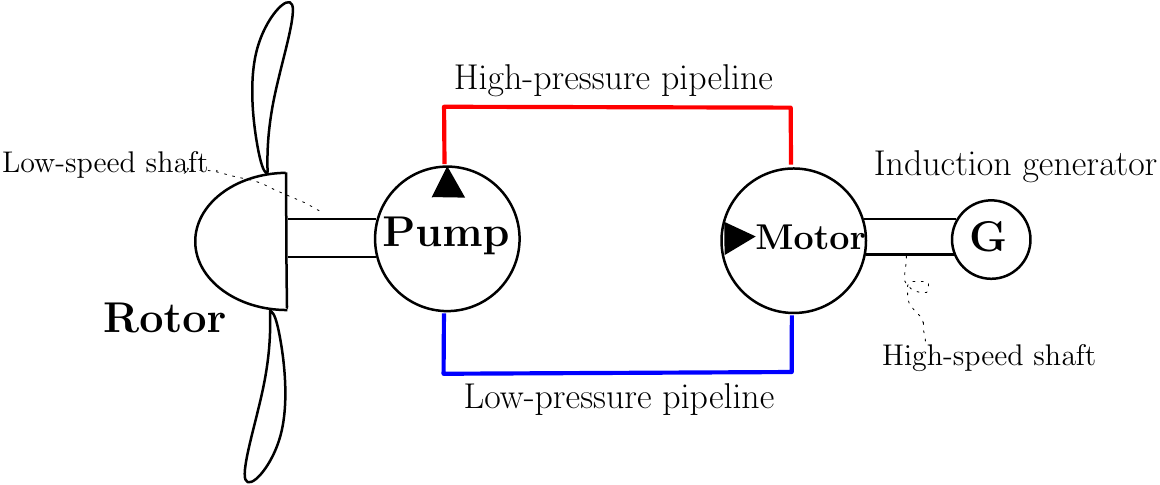}
        \caption{Schematic of a hydraulic transmission for wind turbine.}
      \end{figure}

      As our third main result we present conditions guaranteeing the strong and polynomial stability of the wind turbine tower 
system
with the dynamics
 modeling the hydraulic pump. In particular, in Theorem \ref{thm43} we show that the energy of classical solutions of
the system
decay at the rate $t^{-1}$ as $t\to\infty$.
  This result is based on a new resolvent estimate for abstract coupled systems presented in Theorem \ref{thm:AbsCouplingResEst}.

The results in this paper are related to the study of polynomial stability
of coupled systems of partial differential equations --- especially beam equations --- and ordinary differential equations.
Polynomial stability of mathematical models of a beam with a tip mass have been studied in the case of Euler--Bernoulli beams in~\cite{CheWan2007}, different variants of the Timoshenko beam model in~\cite{RivMun2015,RapVil2017,CarSil2019} and a piezoelectric beam model with magnetic effects in~\cite{AnLiu2024}.
Moreover, the polynomial stabilization of beam models using dynamic boundary control has been investigated for a rotating body–beam system in~\cite{CheWan2006} and for Rayleigh beam models in~\cite{Weh2006,MerNic2017}. 
In addition, polynomial stability of systems consisting of beam equations coupled with other one-dimensional partial differential equations have been studied, for instance, in~\cite{LiHan2018,NicHad2021,AkiGha2023}. 
The earlier studies, e.g.,~\cite{LitMar88,Rao95,Guo2002,che2003,ZhaWei2011suppression,ZhaWei2014,ZhaWei2017,TonZha2017}
 on the stability properties of the SCOLE model and its variants focus on strong and exponential stability, and 
 polynomial stability of these models has not been previously investigated.

  The rest of the paper is organized as follows. Section \ref{Sec:background} introduces the necessary background on a selected classes of infinite-dimensional linear systems and polynomial stability of semigroups. 
  In Section \ref{Sec:fore-aft} we examine the polynomial stability of the wind turbine in the fore-aft plane using feedback based on the velocity and/or angular velocity of
   the nacelle. In Section \ref{Sec:side-side} we introduce stabilization results for the hydraulic wind turbine in the side-side plane using feedback from the angular velocity of
    the nacelle and the angular velocity of the generator motor.

    \textbf{Notation.} 
Given normed spaces $X$ and $Y$, we write $\mathcal{L}(X,Y)$ for the space of bounded linear operators from $X$ to $Y$. 
We denote the domain of a linear operator $A$ by $D(A)$ and endow this space with the graph norm.  We denote the kernel of a linear operator $A$ by $\ker A$ and write $\sigma(A)$ and $\rho(A)$ for the spectrum and the resolvent set of $A$, respectively. The spectrum of $A$ is given by $\sigma(A) = \mathbb{C} \setminus \rho(A)$, with the resolvent operator defined as $R(\lambda, A) = (\lambda - A)^{-1}$. 
The inner product on a Hilbert space $X$ is denoted by $\langle \cdot, \cdot \rangle_X$. 
For $T \in \mathcal{L}(X)$, we define $\re T = \frac{1}{2}(T + T^*)$. 
If $p$ and $q$ are two real-valued quantities, we write $p \lesssim q$ to express that $p \leq Mq$ for some constant $M > 0$ that is independent of all parameters free to vary in a given situation. 
For two functions $f: [0,\infty) \to [0,\infty)$ and $g: [0,\infty) \to [0,\infty)$, we write $f(t) = o(g(t))$ if $\frac{f(t)}{g(t)} \to 0$ as $t \to \infty$, and $f(t) = O(g(t))$ if 
$f(t) \lesssim g(t)$ as $t \to \infty$.

\section{Background results on stability of abstract systems}\label{Sec:background}
In this section we present the abstract framework which we will use to analyze the stability of the wind turbine tower
in Sections~\ref{Sec:fore-aft} and~\ref{Sec:side-side}.
We also present a new result on resolvent estimates for abstract coupled systems.

Let $X_1$, $X_2$, $U$  be Hilbert spaces. Let $L_1:D(L_1)\subset X_1\to X_1$, $G_1$, $K_1:D(L_1)\subset X_1\to U$ be linear operators. Let $A_2:D(A_2)\subset X_2\to X_2$ be the generator of a $C_0$-semigroup on $X_2$,
 $B_2\in\mathcal{L}(U,X_2)$, $C_2\in\mathcal{L}(X_2,U)$ and $D_2\in\mathcal{L}(U)$. Consider 
an abstract coupled system of the form
\begin{align}
\label{eq:BCScoupled}
	\begin{cases}
	\dot z_1(t)=L_1z_1(t),&t\geq0,\\
	\dot z_2(t)=A_2z_2(t)-B_2K_1z_1(t),&t\geq0,\\
	G_1z_1(t)=C_2z_2(t)-D_2K_1z_1(t), &t\geq0,\\
	z_1(0)\in X_1, \quad z_2(0)\in X_2.
	\end{cases}
\end{align}
In our wind turbine models
 in Section~\ref{Sec:fore-aft} 
the states $z_1(t)$
and $z_2(t)$ describe the dynamics of the flexible tower and the nacelle, respectively,
 and
$G_1$ and $K_1$ are 
boundary trace operator which are used to describe the coupling between the two parts of the model.
We may recast~\eqref{eq:BCScoupled} as an abstract Cauchy problem
\begin{align}\label{pro:cauchy}
	\begin{cases}
		\dot z(t) =Az(t), &t\geq0,\\
		z(0)=z_0,
	\end{cases}
\end{align}
with
 $z(t)=(z_1(t),z_2(t))^T$ on $X=X_1\times X_2$, where $A:D(A)\subset X\to X$ is given by
\begin{subequations}
\label{oper:A}
\begin{align}
	A&=\begin{pmatrix}
		L_1&0\\
		-B_2K_1&A_2
	\end{pmatrix}\\
	D(A)&=\{(x_1,x_2)^T\in D(L_1)\times D(A_2): \; (G_1+D_2K_1)x_1=C_2x_2\}.%
\end{align}%
\end{subequations}%
We establish the well-posedness of~\eqref{pro:cauchy} under the assumptions on $(G_1,L_1,K_1)$ and $(A_2,B_2,C_2,D_2)$ listed in the following two definitions.

\begin{defn}\label{def:IPSys}
	The triple $(G_1,L_1,K_1)$ is said to be an \emph{impedance passive boundary node} on $(U,X_1,U)$ if the following hold.
	\begin{enumerate}
		\item[(i)] $G_1$, $K_1\in \mathcal{L}(D(L_1),U)$;
		\item[(ii)] the restriction of $L_1$ to $\ker G_1$ generates a $C_0$-semigroup on $X_1$;
		\item[(iii)] \sloppy  $G_1$ has a bounded right-inverse, i.e., there exists 
     $G^r\in\mathcal{L}(U,D(L_1))$
    such that $GG^r=I$;
		\item[(iv)]
$\re \langle L_1x,x\rangle_{X_1} \leq \re \langle G_1x,K_1x\rangle_{U}$ for all $x\in D(L_1)$.
	\end{enumerate}	
\end{defn}

\begin{defn}
	The
tuple
 $(A_2,B_2,C_2,D_2)$ is said to be an \emph{impedance passive linear system}  on $(U,X_2,U)$ if the following hold.
	\begin{enumerate}
		\item[(i)] $B_2\in\mathcal{L}(U,X_2)$, $C_2\in\mathcal{L}(X_2,U)$, $D_2\in\mathcal{L}(U)$;
		\item[(ii)] $A_2:D(A_2)\subset X_2\to X_2$ generates a $C_0$-semigroup on $X_2$;
		\item[(iii)] 
 $\re \langle A_2x+B_2u,x\rangle_{X_2}\leq \re\langle C_2x+D_2u,u\rangle_U$ for all $x\in D(A_2)$ and $u\in U$.
	\end{enumerate}
\end{defn}

We define the \emph{transfer function} $\tf_2: \rho(A_2)\to \mathcal L(U)$ of the impedance passive linear system $(A_2,B_2,C_2,D_2)$ by $\tf_2(\gl) = C_2R(\gl,A_2)B_2+D_2$, $\gl\in\rho(A_2)$.
The following result from \cite[Thm.~3.8]{NicPau2024} shows that 
$A$ in~\eqref{oper:A} generates a contraction semigroup and the resolvent norm $\|R(is,A)\|$ can be estimated using information on the boundary node $(G_1,L_1,K_1)$, the resolvent operator $\|R(is,A_2)\|$, and a lower bound for $\re \mathbb{H}_2(is)$.%
\begin{theorem}[{\cite[Thm.~3.8 \& Rem.~3.10]{NicPau2024}}]
\label{thm1}
	\sloppy Let $(G_1,L_1,K_1)$ be an impedance passive boundary node on $(U,X_1,U)$, let  $(A_2,B_2,C_2,D_2)$ be an impedance passive linear system on $(U,X_2,U)$.
	Let $A_0$ denote the restriction of $L_1$ to $\ker (G_1+K_1)$, and suppose there exists a non-empty set $E\subset\{s\in\mathbb{R}: \; is\in\rho(A_0)\cap \rho(A_2)\}$. 
Moreover, assume that there exists a function $\eta:E\to(0,\infty)$ such that 
$\re \mathbb{H}_2(is)\geq \eta(s)I$ for all $ s\in E$.
	Then the operator $A$ in \eqref{oper:A} generates a contraction semigroup on $X$, $iE\subset \rho(A)$ and 
	\begin{align*}
\|R(is,A)\|\lesssim \frac{(1+\|R(is,A_0\|)(1+\|R(is,A_2)\|^2)}{\eta(s)}, \quad s\in E.
	\end{align*}
\end{theorem}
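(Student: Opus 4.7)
The plan is to verify the hypotheses of the Lumer--Phillips theorem for $A$ and then, for $s \in E$, to construct the resolvent of $A$ at $is$ in closed form using the transfer function $\tf_2$ and the component resolvents.

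First I would check that $A$ is dissipative. For $(x_1,x_2)^T \in D(A)$ a direct expansion gives
\[
    \re \langle A(x_1,x_2)^T, (x_1,x_2)^T\rangle_X = \re \langle L_1 x_1, x_1\rangle_{X_1} + \re \langle A_2 x_2 - B_2 K_1 x_1, x_2\rangle_{X_2}.
\]
Applying condition (iv) of the impedance passive boundary node together with the passivity inequality for $(A_2,B_2,C_2,D_2)$ with input $u = -K_1 x_1$ bounds this by $\re \langle (G_1 + D_2 K_1) x_1 - C_2 x_2, K_1 x_1\rangle_U$, which vanishes by the domain constraint on $D(A)$. To close the Lumer--Phillips step I would show that $\mathrm{rng}(\gl - A) = X$ for some $\gl > 0$ by lifting the boundary data $C_2 z_2$ to $D(L_1)$ via the right-inverse $G^r$ from condition (iii), reducing the first equation to an inhomogeneous problem on $\ker G_1$ that is solvable via the generator hypothesis, while the second equation is inverted with $R(\gl,A_2)$; a fixed-point argument in $U$ closes the coupling for $\gl$ sufficiently large.

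For the resolvent estimate, fix $s \in E$ and $f = (f_1,f_2)^T \in X$, and look for $z = (z_1,z_2)^T \in D(A)$ solving $(is - A)z = f$. Introduce $v := K_1 z_1 \in U$ as an auxiliary unknown. Using $G^r$ together with the bijectivity of $is - A_0 \colon D(A_0) \to X_1$ (which holds by the hypothesis $is \in \rho(A_0)$), the equation $(is - L_1) z_1 = f_1$ with boundary condition $(G_1 + D_2 K_1)z_1 = C_2 z_2$ can be solved for $z_1$ as an affine function of $v$, $z_2$, and $f_1$, with a bound controlled by $1 + \|R(is,A_0)\|$. Eliminating $z_2 = R(is,A_2)(f_2 - B_2 v)$ from the second equation and substituting into the coupling relation produces an algebraic equation of the form
\[
    \tf_2(is) v = \phi(f_1,f_2,s),
\]
where $\phi$ is controlled by $\|f\|_X$, $1 + \|R(is,A_0)\|$, and $1 + \|R(is,A_2)\|$. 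Since $\re \tf_2(is) \geq \eta(s) I$, a Cauchy--Schwarz argument applied to $\tf_2(is)$ and its adjoint shows $\tf_2(is)$ is boundedly invertible on $U$ with $\|\tf_2(is)^{-1}\| \leq 1/\eta(s)$, so $v$, and hence $z$, can be recovered.

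The main obstacle is the bookkeeping when assembling $\|z\|_X$ from the intermediate bounds on $v$, $z_1$, and $z_2$: the factor $(1 + \|R(is,A_2)\|^2)$ arises because $R(is,A_2)$ enters once through $z_2 = R(is,A_2)(f_2 - B_2 v)$ and a second time inside $\tf_2(is) = C_2 R(is,A_2) B_2 + D_2$ when inverting the algebraic equation for $v$. Combining these contributions with the $1/\eta(s)$ factor from inverting $\tf_2(is)$ and the $1 + \|R(is,A_0)\|$ factor from the boundary-node step yields the claimed resolvent bound, and in particular shows $iE \subset \rho(A)$.
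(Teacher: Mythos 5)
There is a genuine gap at the heart of your argument: the reduction of the coupled system to the algebraic equation $\tf_2(is)v=\phi(f_1,f_2,s)$ with $\phi$ independent of $v$ is not available. Writing out the elimination, (ii) $z_2=R(is,A_2)(f_2-B_2v)$ inserted into the coupling relation gives $G_1z_1=C_2R(is,A_2)f_2-\tf_2(is)v$, i.e.\ $z_1$ must solve $(is-L_1)z_1=f_1$ subject to the \emph{frequency-dependent} boundary condition $(G_1+\tf_2(is)K_1)z_1=C_2R(is,A_2)f_2$. Since $v=K_1z_1$ and $z_1$ cannot be determined from $(is-L_1)z_1=f_1$ alone (that operator is not injective on $D(L_1)$; the boundary condition is what fixes the solution), the unknown $v$ reappears inside whatever you call $\phi$. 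If you lift the boundary data through $\ker(G_1+K_1)$ and use $R(is,A_0)$, the equation you actually obtain has the form $\bigl[I-N(is)(I-\tf_2(is))\bigr]v=\psi$, where $N(is)$ is the frequency response of the boundary node from $(G_1+K_1)z_1$ to $K_1z_1$ — not $\tf_2(is)v=\phi$. Inverting this combined operator, equivalently showing that $is-A_{is}$ is boundedly invertible where $A_{is}$ is the restriction of $L_1$ to $\ker(G_1+\tf_2(is)K_1)$, is the crux of the theorem, and it cannot be done by the bound $\|\tf_2(is)^{-1}\|\leq 1/\eta(s)$ alone (which is correct but inverts the wrong operator); it requires combining the passivity of the node, which yields $\eta(s)\|K_1x\|^2\leq\re\langle(is-A_{is})x,x\rangle$, with a perturbation off $A_0$ (writing $G_1+\tf_2(is)K_1=(G_1+K_1)+(\tf_2(is)-I)K_1$) and a Young-type absorption. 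A second, related omission is that you never control the unbounded trace operators: $K_1$ and $G_1$ are bounded only on $D(L_1)$ with the graph norm, so bounding $v$, the lift, and ultimately $\|z_1\|_{X_1}$ by $(1+\|R(is,A_0)\|)$ requires precisely these passivity estimates, not just operator-norm bookkeeping.

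For context, the paper does not prove this statement itself — it is imported from \cite[Thm.~3.8]{NicPau2024} — but the proof of the closely analogous Theorem~\ref{thm:AbsCouplingResEst} given in Section~\ref{Sec:background} displays the intended mechanism: a Schur-type factorization of $is-A$ isolating $is-A_{is}$, the estimates of Lemma~\ref{lem:AbsIPestimates} (e.g.\ $\|R(is,A_K)B_K\|^2\lesssim\|R(is,A_K)\|$) to tame the input/output maps, the identity $x=R(is,A_K)y-R(is,A_K)B_K[\tf_2(is)-K]C_{is}x$ together with $\|C_{is}x\|^2\leq\eta(s)^{-1}\|x\|\|y\|$, and Young's inequality to close the bound. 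Your opening dissipativity computation and the Lumer--Phillips generation step are fine and match the standard route, but the resolvent estimate as you have set it up would not go through.
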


The model we study in Section~\ref{Sec:side-side} can be represented as a coupled system consisting of two impedance passive linear systems. 
Let $(A_1,B_1,C_1,D_1)$ be an impedance passive linear system on $(U,X_1,U)$,
assume 
that $I+D_1D_2$ is boundedly invertible and 
denote $Q_1=(I+D_1D_2)^{-1}$ and $Q_2=(I+D_2D_1)^{-1}$. We consider 
the 
 coupled
 system
\begin{align*}
	\begin{cases}
		\dot{z}_1(t)=(A_1-B_1D_2Q_1C_1)z_1(t)+B_1Q_1C_2z_2(t), &t\geq0,\\
		\dot{z}_2(t)=(A_2-B_2Q_1D_1C_2)z_2(t)-B_2Q_1C_1z_1(t),&t\geq0,\\
		z_1(0)\in X_1, \quad z_2(0)\in X_2.
	\end{cases}
\end{align*}
We may reformulate this coupled system as an abstract Cauchy problem  \eqref{pro:cauchy}
with state $z(t)=(z_1(t),z_2(t))^T$
 on $X=X_1\times X_2$, where $A:D(A)\subset X\to X$ is defined by
\begin{align}\label{oper:Atilde}
	{A}=\begin{pmatrix}
		A_1-B_1D_2Q_1C_1&B_1Q_1C_2\\
		-B_2Q_1C_1&A_2-B_2Q_1D_1C_2
	\end{pmatrix}, \qquad D(A) = D(A_1)\times D(A_2).
\end{align}

If $K\in \mathcal{L}(U)$ is such that $K\geq cI$ for some $c>0$, then
Lemma~\ref{lem:AbsIPestimates} below shows that $I+D_1K$ is boundedly invertible and the operator
\begin{align*}
	A_K:=A_1-B_1K(I+D_1K)^{-1} C_1, \qquad D(A_K)=D(A_1).	
\end{align*}
generates a contraction semigroup on $X_1$.
The following result shows that the norm $\|R(is, A)\|$ of the resolvent operator of $A$ in \eqref{oper:Atilde}
 can be estimated 
using the resolvent norms $\|R(is, A_K)\|$ and $\norm{R(is,A_2)}$
 and a lower bound for $\re\, \mathbb{H}_2(is)$. 
We note that this result is a variation of~\cite[Thm.~3.8]{NicPau2024}.

\begin{theorem}\label{thm:AbsCouplingResEst}
	Let $(A_1,B_1,C_1,D_1)$ and $(A_2,B_2,C_2,D_2)$ be two linear impedance passive linear systems on the Hilbert spaces $(U,X_1,U)$ and $(U,X_2,U)$, assume that $I+D_1D_2$ is boundedly invertible,
and assume that $K\in \mathcal L(U)$ is such that $\re K\ge cI$ for some $c>0$.
	Moreover, assume that there exists 
	a set $E\subset \mathbb R$ and 
	a function $\eta:E\to (0,\infty)$ such that
	 $iE\subset \rho(A_K)\cap \rho(A_2)$ and 
$\re \mathbb{H}_2(is)\geq \eta(s)I$ for all $ s\in E$.
	Then the operator ${A}$ in \eqref{oper:Atilde} generates a contraction semigroup on $X$, $iE\subset \rho(A)$ 
and
	\begin{align}\label{eq:AbsResBound}
		\|R(is,{A})\|\lesssim \frac{(1+\|R(is,A_K)\|)(1+\|R(is,A_2)\|^2)}{\eta(s)}, \qquad s\in E.
	\end{align}
	\end{theorem}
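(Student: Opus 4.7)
The result is labeled a variation of Theorem~\ref{thm1} (i.e., \cite[Thm.~3.8]{NicPau2024}), and the plan is to follow that proof almost line by line, with two key adjustments: (a) the first component is now a full impedance passive linear system rather than a boundary node, and (b) the unit feedback hidden inside the definition of $A_0$ is replaced by the general coercive gain $K$. First I would verify contraction-semigroup generation for $A$ in~\eqref{oper:Atilde} via Lumer--Phillips. Viewing~\eqref{oper:Atilde} as the feedback interconnection of $(A_1, B_1, C_1, D_1)$ and $(A_2, B_2, C_2, D_2)$ with $u_1 = y_2$, $u_2 = -y_1$, and $y_j = C_j z_j + D_j u_j$, the identity $A z = (A_1 z_1 + B_1 u_1,\, A_2 z_2 + B_2 u_2)$ reduces dissipativity to summing the two impedance passivity inequalities:
\begin{align*}
\re \langle A z, z\rangle_X \leq \re\langle y_1, u_1\rangle_U + \re\langle y_2, u_2\rangle_U = \re\langle y_1, y_2\rangle_U - \re\langle y_2, y_1\rangle_U = 0.
\end{align*}
The range condition is then obtained by resolving the algebraic feedback loop using the bounded invertibility of $I + D_1 D_2$.

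Next, to establish $iE \subset \rho(A)$ together with the resolvent estimate, I would solve $(is - A) z = f$ on the imaginary axis. Rewriting in interconnection form $(is - A_j) z_j = B_j u_j + f_j$ with $u_1 = y_2$ and $u_2 = -y_1$, the central device is the shifted variable $v := u_1 + K y_1$. A push-pull computation using $(I+KD_1)^{-1} K = K(I+D_1K)^{-1}$ then recasts the first equation as $(is - A_K) z_1 = B_1 (I + KD_1)^{-1} v + f_1$, and since $is \in \rho(A_K)$ by hypothesis this yields $\|z_1\| \lesssim \|R(is, A_K)\| (\|v\| + \|f_1\|)$. The second-system relations $z_2 = R(is, A_2)(B_2 u_2 + f_2)$ and $y_2 = \mathbb{H}_2(is) u_2 + C_2 R(is, A_2) f_2$, combined with $u_1 = y_2$, $u_2 = -y_1$, and the expression of $y_1$ in terms of $z_1$ and $v$, collapse to a single linear equation determining $v$ in terms of $f$, whose coefficient involves $(I + KD_1)^{-1}$, $(I + D_1 \mathbb{H}_2(is))^{-1}$, and $K - \mathbb{H}_2(is)$.

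I expect the bookkeeping in this final step to be the main obstacle. One must check that each algebraic-loop operator is boundedly invertible with norm controlled either by an $s$-independent constant (using $\re K \geq cI$ and the impedance passivity consequence $\re D_1 \geq 0$, which is essentially the content of Lemma~\ref{lem:AbsIPestimates}) or by $\eta(s)^{-1}$ (using $\re \mathbb{H}_2(is) \geq \eta(s) I$, which via $\re \mathbb{H}_2(is)^{-1} = \mathbb{H}_2(is)^{-1}(\re\mathbb{H}_2(is))\mathbb{H}_2(is)^{-*}$ makes $D_1 + \mathbb{H}_2(is)^{-1}$ coercive). Tracking the constants yields $\|v\|,\, \|u_2\|,\, \|y_1\| \lesssim \eta(s)^{-1}(1 + \|R(is, A_2)\|)\|f\|$; combined with the component-wise estimates for $z_1$ and $z_2$ this produces~\eqref{eq:AbsResBound} and, simultaneously, shows $is \in \rho(A)$. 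As in~\cite{NicPau2024}, the factor $\|R(is, A_2)\|^2$ arises from the composition of the dependence $u_2 \mapsto z_2$ through $R(is, A_2)$ with the further appearance of $R(is, A_2)$ in the expression for $u_2$ obtained from the $y_2$ relation. Beyond adapting the $K$-dependent push-pull identities, no conceptually new ingredient should be required.
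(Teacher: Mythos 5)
Your overall strategy is the same as the paper's: generation via Lumer--Phillips and the interconnection passivity identity, and then a reduction of the resolvent equation to the operator $A_K$ by absorbing the coercive gain $K$ into the first subsystem and treating $\mathbb{H}_2(is)-K$ as the remaining perturbation. The paper organizes this through the intermediate operator $A_{is}=A_1-B_1\mathbb{H}_2(is)(I+D_1\mathbb{H}_2(is))^{-1}C_1$ and a block-triangular factorization of $is-A$, whereas you eliminate the internal states and reduce to a single algebraic-loop equation for $v=u_1+Ky_1$ on $U$; these are equivalent reorganizations of the same computation, and your push-pull identity $(is-A_K)z_1=B_1(I+KD_1)^{-1}v+f_1$ is exactly the paper's identity $y=(is-A_K)x+B_K[\mathbb{H}_2(is)-K]C_{is}x$ read in the other direction.

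Two steps in your sketch are, however, genuinely incomplete. First, an a priori estimate $\|z\|\lesssim(\cdots)\|f\|$ shows only that $is-A$ (equivalently, your loop operator $I-(K-\mathbb{H}_2(is))\mathbb{H}_K(is)$ on $U$) is injective with closed range; it does not by itself give $is\in\rho(A)$, since $U$ and $X$ may be infinite-dimensional. The paper closes this by invoking the Mean Ergodic Theorem for the contraction generator $A_{is}$ (lower bounded $\Leftrightarrow$ invertible); you would need the same device, applied either to $A$ itself or to $A_{is}$. Second, your plan to control the loop inverse via the coercivity of $D_1+\mathbb{H}_2(is)^{-1}$ yields a bound of order $\eta(s)^{-1}\|\mathbb{H}_2(is)\|^{2}$ for $\|(I+D_1\mathbb{H}_2(is))^{-1}\|$, and naive operator-norm bookkeeping along these lines produces worse powers of $\eta(s)^{-1}$ and $\|R(is,A_2)\|$ than~\eqref{eq:AbsResBound}; in particular the claimed intermediate bound $\|v\|\lesssim\eta(s)^{-1}(1+\|R(is,A_2)\|)\|f\|$ does not follow this way. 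The paper gets the correct exponents by never estimating $\|(I+D_1\mathbb{H}_2(is))^{-1}\|$ at all: it applies the impedance passivity inequality directly to the solution trajectory to obtain $\eta(s)\|C_{is}x\|^2\leq\|x\|\,\|y\|$ (equivalently $\eta(s)\|y_1\|^2\leq \re\langle y_1,-u_1\rangle+\cdots$ in your variables), uses the square-root gains $\|R(is,A_K)B_K\|\leq c^{-1/2}\|R(is,A_K)\|^{1/2}$ from Lemma~\ref{lem:AbsIPestimates}, and closes the resulting circular inequality with Young's inequality. You should replace your coercivity bookkeeping by this trajectory-level passivity estimate; with that substitution and the Mean Ergodic step, your elimination argument goes through and reproduces the paper's proof.
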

	The proof of Theorem~\ref{thm:AbsCouplingResEst} is based on the following lemma.
\begin{lemma}
\label{lem:AbsIPestimates}
Let $(A,B,C,D)$ be an impedance passive linear system on $(U,X,U)$  and let $Q\in \mathcal{L}(U)$ be such that $\re Q\geq cI$ for some $c>0$.
Then $I+DQ$ is boundedly invertible and if we define
\begin{align*}
	A_Q&= A-BQ(I+DQ)^{-1}C, 
&&B_Q=B(I+QD)^{-1}, \\
C_Q&= (I+DQ)^{-1} C, 
&&D_Q=(I+DQ)^{-1} D,
\end{align*}
then $(A_Q,B_Q,C_Q,D_Q)$ is an impedance passive linear system on $(U,X,U)$.
Moreover, for all $\lambda\in \rho(A_Q)\cap \overline{\C_+}$ we have
\begin{align*}
	&\|R(is,A_Q) B_Q \|^2 
\leq c^{-1}\|R(is,A_Q)\|,\\
&\|C_Q R(is,A_Q)\|^2
\leq c^{-1}\|R(is,A_Q)\|,\\
&\|C_Q R(is,A_Q)B_Q+D_Q\|
\leq c^{-1}.
\end{align*}
\end{lemma}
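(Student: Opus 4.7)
The quadruple $(A_Q,B_Q,C_Q,D_Q)$ is the closed-loop system obtained by applying the negative output feedback $u=v-Qy$ to $(A,B,C,D)$, where $y=Cx+Du$ is the output. The whole proof is driven by this interpretation together with the standard ``push-pull'' identities
\[
(I+DQ)^{-1}D=D(I+QD)^{-1},\quad BQ(I+DQ)^{-1}=B(I+QD)^{-1}Q,\quad (I+DQ)^{-1}=I-D(I+QD)^{-1}Q,
\]
each verified by multiplying through by $(I+DQ)$ or $(I+QD)$.

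First I would verify that $I+DQ$ is boundedly invertible. Impedance passivity of $(A,B,C,D)$ with $x=0$ forces $\re D\ge 0$, hence
\[
\re\langle(I+DQ)u,Qu\rangle_U=\langle(\re Q)u,u\rangle_U+\langle(\re D)Qu,Qu\rangle_U\ge c\|u\|^2,
\]
which yields $\|(I+DQ)u\|\ge(c/\|Q\|)\|u\|$. For surjectivity I pass to the adjoint: if $(I+Q^*D^*)v=0$, then testing the identity $Q^*D^*v=-v$ against $D^*v$ and using $\re Q^*\ge cI$ forces $D^*v=0$, so $v=0$. Next, the push-pull identities imply that for every $x\in D(A)$ and $v\in U$, setting $y_c:=C_Qx+D_Qv$ and $w:=v-Qy_c$ gives $Ax+Bw=A_Qx+B_Qv$ and $Cx+Dw=y_c$. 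Substituting into the impedance passivity of $(A,B,C,D)$ and using $\re\langle y_c,Qy_c\rangle_U\ge c\|y_c\|^2$ produces the \emph{strengthened} passivity inequality
\[
\re\langle A_Qx+B_Qv,x\rangle_X\le\re\langle y_c,v\rangle_U-c\|y_c\|^2_U.
\]
Since $BQ(I+DQ)^{-1}C\in\mathcal L(X)$, $A_Q$ generates a $C_0$-semigroup by bounded perturbation, and the above with $v=0$ makes it contractive, so $(A_Q,B_Q,C_Q,D_Q)$ is impedance passive.

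The three estimates then drop out of the strengthened inequality on the imaginary axis. For the feedthrough bound, set $x=R(is,A_Q)B_Qu$ and $v=u$: the left-hand side becomes $\re\langle isx,x\rangle=0$ and $y_c=\tf_Q(is)u$, so $c\|\tf_Q(is)u\|^2\le\re\langle\tf_Q(is)u,u\rangle\le\|\tf_Q(is)u\|\|u\|$, giving $\|\tf_Q(is)u\|\le c^{-1}\|u\|$. For the observability bound, set $v=0$ to obtain $\re\langle A_Qx,x\rangle\le-c\|C_Qx\|^2$, then substitute $x=R(is,A_Q)z$ for $z\in X$ so that $A_Qx=isx-z$ and $\re\langle A_Qx,x\rangle=-\re\langle z,x\rangle$; this yields $c\|C_QR(is,A_Q)z\|^2\le\|R(is,A_Q)\|\|z\|^2$. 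The controllability bound I would obtain by duality: the adjoint $(A^*,C^*,B^*,D^*)$ is itself impedance passive (apply the passivity of $(A,B,C,D)$ with input $-v$) and $\re Q^*\ge cI$, so the push-pull calculus identifies its $Q^*$-closed-loop with $(A_Q^*,C_Q^*,B_Q^*,D_Q^*)$; applying the observability bound to the dual system and using $R(is,T^*)=R(-is,T)^*$ translates, after replacing $s$ by $-s$, into $\|R(is,A_Q)B_Q\|^2\le c^{-1}\|R(is,A_Q)\|$.

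The main obstacle is the bookkeeping rather than any single estimate: one must check carefully that feedback commutes with taking adjoints and that the choice $w=v-Qy_c$ really reproduces the closed-loop state and output equations. Once these algebraic identities are in place, each of the three bounds is a two-line consequence of the strengthened passivity inequality.
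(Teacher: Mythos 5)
Your proposal is correct and follows the same overall strategy as the paper: invertibility of $I+DQ$, a strengthened dissipation inequality for the closed loop (your inequality $\re\langle A_Qx+B_Qv,x\rangle\le\re\langle y_c,v\rangle-c\|y_c\|^2$ is exactly the intermediate line in the paper's passivity computation), the observability estimate via $x=R(is,A_Q)z$, and the controllability estimate by passing to $(A^*,C^*,B^*,D^*)$ with feedback through $Q^*$. Two sub-steps are done differently, both validly: for invertibility the paper factors $I+DQ=(Q^{-1}+D)Q$ and invokes $\re Q^{-1}\ge c\|Q\|^{-2}I$, whereas you prove coercivity of $\langle (I+DQ)u,Qu\rangle$ directly and handle surjectivity through the adjoint; and for the feedthrough bound the paper works in $\C_+$ with the frequency-domain identity $\tf_Q(\lambda)=\tf(\lambda)(I+Q\tf(\lambda))^{-1}$ and then extends to $\overline{\C_+}$ by continuity, whereas you evaluate the state-space dissipation inequality at $x=R(is,A_Q)B_Qu$, which yields the bound directly on $\rho(A_Q)\cap\overline{\C_+}$ and avoids the continuity step. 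The only point stated rather than proved is that $(A^*,C^*,B^*,D^*)$ is again impedance passive; your parenthetical hint (substituting $-v$) is the right substitution but the full justification goes through maximal dissipativity of the block operator $\bigl(\begin{smallmatrix}A&B\\-C&-D\end{smallmatrix}\bigr)$ and its adjoint --- the paper asserts this fact without proof as well, so this is not a gap relative to the paper's own standard.
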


\begin{proof}
	The assumption $\re Q\geq cI$ implies that $Q$ is boundedly invertible, and since $I+DQ=(Q^{-1} +D)Q$, where $\re(Q^{-1} +D)\geq \re Q^{-1}\geq c \|Q\|^{-2}I$ by~\cite[Lem.~A.1(a)]{Pau19}, we have that $I+DQ$ is boundedly invertible.
The operator $A_Q$ generates a semigroup on $X_2$ and $B_Q$, $C_Q$, and $D_Q$ are bounded.
To show that $(A_Q,B_Q,C_Q,D_Q)$ satisfies condition (iii) of Definition~\ref{def:IPSys}, denote
$T_1 = (I+D Q )\inv$, $T_2 = (I+ Q D)\inv$.
For all $x\in D(A_Q)$ and $u\in U$ the impedance passivity of $(A,B,C,D)$ 
and $\re Q\geq 0$ imply that
\begin{align*}
	\re \langle A_Q x + B_Q u,x\rangle
	  &=\re \langle A x + BT_2(- Q C x + u),x\rangle\\
	  &\hspace{-0.6em}\leq 
	  \re \langle C x + DT_2(- Q C x + u),T_2(- Q C x + u)\rangle\\
	   &\hspace{-0.6em}=\re  \langle T_1C x + T_1Du,u\rangle
	   +\re  \langle T_1C x + T_1Du,-T_2 Q C x +(T_2-I)u\rangle\\
	   &\hspace{-0.6em}=\re \langle C_Q x + D_Q  u,u\rangle
	   -\re \langle T_1C x + T_1Du,Q(T_1 C x + T_1Du)\rangle\\
	 &\hspace{-0.6em}  \leq \re  \langle C_Q x + D_Q  u , u\rangle .
\end{align*}
This 
completes the proof
 that $(A_Q,B_Q,C_Q,D_Q)$ is an impedance passive linear system.
	Now let $\gl\in\rho(A_Q)\cap\overline{\C_+}$ and $x\in X$.
If we denote $z=R(is,A_Q) x\in D(A_Q)$, then 
	 the impedance passivity of $(A,B,C,D)$ implies
\begin{align*}
	\re  \langle x,z\rangle
	&= \re \langle (is-A_Q)z,z\rangle
	= -\re \langle Az+B(-Q(I+DQ)\inv Cz),z\rangle\\
	&\geq -\re  \langle Cz+D(-Q(I+DQ)\inv Cz),-Q(I+DQ)\inv Cz\rangle\\
	&=\re  \langle Q C_Qz, C_Qz\rangle
	\geq c \|C_Q z\|^2.
\end{align*}
Since
 $C_Qz = C_QR(is,A_Q)x$ and $\re \langle x,z\rangle\leq \norm{x}\norm{z}=\norm{x}\norm{R(is,A_Q) x}$,
 the proof of the second estimate 
$\norm{C_QR(is,A_Q)  }^2 \leq c\inv\norm{R(is,A_Q)}$ is complete.

We note that  
$(A^\ast,C^\ast,B^\ast,D^\ast)$
 is also an impedance passive linear system and $\re Q^\ast = \re Q\geq cI$. Therefore, the first estimate follows from applying the second estimate to the system obtained from
$(A^\ast,C^\ast,B^\ast,D^\ast)$
 with feedback $u=-Q^\ast y$.

Finally, to prove the last estimate we let $\gl\in\C_+$ and $u\in U$ and 
denote $\tf_Q(\gl)=C_QR(is,A_Q)B_Q+D_Q$ and $\tf(\lambda)=CR(\lambda, A)B+D$.
 Since $\re Q\geq cI$ and $\re \tf(\gl)\ge 0$, we have that 
$I+Q\tf(\gl)$
 is boundedly invertible and $\tf_Q(\gl)=\tf(\gl)(I+Q\tf(\gl))\inv$.
Using this 
and $\re \tf(\lambda)\geq 0$ we can estimate
\begin{align*}
	c \|\tf_Q(\lambda)u\|^2 &\leq \re\langle Q\tf_Q(\lambda)u,\tf_Q(\lambda)u\rangle= \re\langle Q\tf(\lambda)(I+Q\tf(\lambda))\inv u,\tf_Q(\lambda)u\rangle  \\
&= \re\langle u,\tf_Q(\lambda)u\rangle 
- \re\langle (I+Q\tf(\lambda))\inv u,\tf(\lambda)(I+Q\tf(\lambda))\inv u\rangle \\
&\leq \|\tf_Q(\lambda)u\|\|u\|.
\end{align*}
Thus $\|\tf_Q(\lambda)\|\leq c\inv$ for all $\lambda\in\C_+$ and, by continuity, this estimate also holds for $\lambda\in\rho(A_Q)\cap \overline{\C_+}$.
\end{proof}
	\begin{proof}[Proof of Theorem~\textup{\ref{thm:AbsCouplingResEst}}]
		Let $E$ and $\eta$ be as in the claim and fix $s\in E$.
		Since $\re \mathbb{H}_2(is)\geq \eta(s)I$ with $\eta(s)>0$ by assumption, applying Lemma~\ref{lem:AbsIPestimates} to the system $(A_1,B_1,C_1,D_1)$ with $Q=\mathbb{H}_2(is)$ shows that $I+D_1\mathbb{H}_2(is)$ (and hence also $I+\mathbb{H}_2(is)D_1$) is boundedly invertible.
We define
\begin{gather*}
    A_{is}=A_1-B_1\mathbb{H}_2(is)(I+D_1\mathbb{H}_2(is))\inv C_1, \quad D(A_{is})=D(A),\\
		B_{is}= B_1(I+\mathbb{H}_2(is)D_1)\inv, \; C_{is}= (I+D_1\mathbb{H}_2(is))\inv C_1,\; 	D_{is}= (I+D_1\mathbb{H}_2(is))\inv D_1
\end{gather*}
and $\mathbb H_{is}(is)=C_{is} R(is,A_{is}) B_{is} +D_{is}$.
Lemma~\ref{lem:AbsIPestimates} shows that
$(A_{is},B_{is},C_{is},D_{is})$
is an impedance passive linear system and
if $is\in\rho(A_{is})$, then
		\begin{subequations}\label{eq:SchurAddEstimates}
			\begin{align}
				\|R(is,A_{is}) B_{is}\|^2 
		&\leq \eta(s)\inv\|R(is,A_{is})\|\\
		\|C_{is}R(is,A_{is})\|^2 
		&\leq \eta(s)\inv\|R(is,A_{is})\|\\
		\|\tf_{is} (is)\|
		&\leq \eta(s)\inv.
			\end{align}
		\end{subequations}
		Since $A_{is}$ is a generator of a strongly continuous contraction semigroup, the Mean Ergodic Theorem~\cite[Prop. 4.3.1 \& Cor. 4.3.5]{AreBat11book} implies that $is-A_{is}$ is boundedly invertible if and only if
		 it is lower bounded, i.e.,
		 $\|(is-A_{is})x\|\geq c_s \|x\|$ for some $c_s>0$ and for all $x\in D(A_{is})$. We will now show that if $is-A_{is}$ is boundedly invertible,
 then $is\in\rho(A)$.
		If we denote $A_2^{cl}=A_2-B_2(I+D_1D_2)\inv D_1C_2$,
		then it is straightforward to verify that
\begin{align*}
	is-A 
	= \begin{pmatrix}I&-B_{is}C_2R({is},A_2)\\0&I\end{pmatrix} 
	\begin{pmatrix}is-A_{is}&0\\0&is-A_2^{cl}\end{pmatrix}
	\begin{pmatrix}I&0\\R({is},A_2)B_2 C_{is}&I\end{pmatrix}.
\end{align*}
We also note that since $is\in\rho(A_2)$ and since $I+D_1\mathbb{H}_2(is)$ is invertible, 
a direct computation shows that $R({is},A_2)-R({is},A_2)B_2 D_{is}C_2 R({is},A_2)$ is a bounded inverse of $is-A_2^{cl}$, and thus $is\in \rho(A_2^{cl})$.
These properties imply that if $is\in \rho(A_{is})$, then $is-A$ has a bounded inverse given by
\begin{align*}
	\begin{pmatrix}R(is,A_{is}) &R(is,A_{is})B_{is} C_2 R({is},A_2)\\ -R({is},A_2)B_2 C_{is} R(is,A_{is})&R({is},A_2)- R({is},A_2)B_2  \mathbb{H}_{is}(is) C_2 R({is},A_2)\end{pmatrix}.
\end{align*}
Noting that $\eta(s)\leq \norm{P_2(is)}\lesssim 1+\norm{R(is,A_2)}$ the estimates~\eqref{eq:SchurAddEstimates} imply that
		\begin{align}\label{eq:AbsCoupGenResEst}
			\|R(is,A)\|
		&\lesssim \|R(is,A_{is})\|
+ \frac{\norm{R(is,A_2)}\norm{R(is,A_{is})}^{1/2}}{\eta(s)^{1/2}}
 +  \frac{1+\|R(is,A_2)\|^2}{\eta(s)}.
		\end{align}
Here and in the remaining part of the proof the implicit constants in the estimates are independent of the choice of $s\in E$.
		
		We will now show that $is-A_{is}$ is lower bounded and derive an estimate for $\|R(is,A_{is})\|$. To this end,
let $x\in D(A_{is})$ and denote $y=(is-A_{is}) x$.
Using 
 $A_K = A_1-B_1K(I+D_1K)\inv C_1 $ and denoting $B_K = B_1(I+KD_1)\inv$
		we can write
\begin{align*}
	y &= (is-A_1)x+B_1\mathbb{H}_2(is)(I+D_1\mathbb{H}_2(is))\inv C_1 x\\
		&= (is-A_K)x+B_K [\mathbb{H}_2(is) - K](I+D_1\mathbb{H}_2(is))\inv C_1 x.
\end{align*}
		Since $s\in E$, we have $is\in\rho(A_K)$ by assumption, and thus 
	\begin{align}\label{eq:AbsResProofx}
		x&= R(is,A_K) y- R(is,A_K)B_K [\mathbb{H}_2(is) - K] C_{is} x.
	\end{align}
		A direct estimate using the impedance passivity of $(A_1,B_1,C_1,D_1)$ and $\re \mathbb{H}_2(is)\geq \eta(s)I$ shows that  
\begin{align*}
	\re \langle y,x\rangle
		&=  \re \langle (is-A_{is})x,x\rangle
		=  -\re \langle A_1x+B_1(-\mathbb{H}_2(is)C_{is} x),x\rangle\\
		&\geq  -\re \langle C_1x+D_1(-\mathbb{H}_2(is)(I+D_1\mathbb{H}_2(is))\inv C_1 x),-\mathbb{H}_2(is) C_{is} x\rangle\\
		&=  \re \langle C_{is} x,\mathbb{H}_2(is)C_{is} x\rangle
		= \eta(s) \|C_{is} x\|^2.
\end{align*}
		This in particular implies $\|C_{is}x\|^2\leq \eta(s)\inv \|x\|\|y\|$.
		Using~\eqref{eq:AbsResProofx}, the
		above estimates, and $\|R(is,A_K) B_K\|^2\lesssim \|R(is,A_K)\|$ from 
		Lemma~\ref{lem:AbsIPestimates} shows that
\begin{align*}
	\|x\|
		\leq \|R(is,A_K)\| \|y\|
		+ \|R(is,A_K)\|^{1/2}(\norm{K}+\|\mathbb{H}_2(is)\|) \left( \frac{\|x\|\|y\|}{\eta(s)} \right)^{1/2}.
\end{align*}
Using
 Young's inequality leads to the estimate
\begin{align*}
	\|x\|\lesssim \frac{(1+\|R(is,A_K)\|)(1+\|\mathbb{H}_2(is)\|^2)}{\eta(s)}\|y\|.
\end{align*}
		Since $x\in D(A_{is})$ was arbitrary, we have that $is-A_{is}$ is lower bounded, and therefore boundedly invertible. Moreover, its inverse satisfies 
		\begin{align*}
			\|R(is,A_{is})\|\lesssim \frac{(1+\|R(is,A_K)\|)(1+\|\mathbb{H}_2(is)\|^2)}{\eta(s)}.
		\end{align*}
		Finally, using the estimate~\eqref{eq:AbsCoupGenResEst}
		and the fact that $\|\mathbb{H}_2(is)\|\leq \|C_2\|\|R(is,A_2)\|\|B_2\|+\|D_2\|$ leads to the desired estimate for $\|R(is,A)\|$.
	 \end{proof}
We conclude this section by recalling the following theorem from \cite[Thm. 2.4]{BorTom2010}. This classical result allows us to deduce a rational decay rate for the orbits of a $C_0$-semigroup based on resolvent estimates for its generator.

\begin{theorem}\label{thm:BorTom2010}
	Let $(\mathbb{T}(t))_{t\geq0}$ be a bounded $C_0$-semigroup on a Hilbert space $X$ with generator $A$ such that $i\mathbb{R}\subset \rho(A)$. Then for a fixed $\alpha>0$
	the following conditions are equivalent:
	\begin{enumerate}
		\item[(a)] $\|R(is,A)\|=O(|s|^\alpha)$ as $ |s|\to\infty$,
		\item[(b)] $\|\mathbb{T}(t)x\|=o(t^{-1/\alpha})$ as $   t\to\infty$ for all $ x\in D(A)$. 
	\end{enumerate}
\end{theorem}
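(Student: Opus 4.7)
This is the classical Borichev--Tomilov characterization of polynomial semigroup decay via resolvent growth, so the plan is to outline the two implications separately, highlighting the role of the Hilbert space structure.

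For the direction (b) $\Rightarrow$ (a): The boundedness of $(\mathbb{T}(t))_{t\ge 0}$ together with $i\R\subset\rho(A)$ gives the Laplace representation $R(\gl,A)x=\int_0^\infty e^{-\gl t}\mathbb{T}(t)x\,dt$ for $\re\gl>0$. Assumption (b) implies, by a standard density argument, that $\|\mathbb{T}(t)A^{-1}\|=O(t^{-1/\alpha})$. For $y\in X$, writing $x=A^{-1}y$ and integrating by parts converts the Laplace transform of $\mathbb{T}(t)A^{-1}y$ into a controlled expression for $R(\gl,A)A^{-1}y$; passing to $\gl=is$ and using $R(is,A)=A R(is,A)A^{-1}+R(is,A)\cdot 0$ type manipulations (together with the identity $isR(is,A)A^{-1}=R(is,A)+A^{-1}$) yields the resolvent bound $\|R(is,A)\|=O(|s|^\alpha)$.

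For the harder direction (a) $\Rightarrow$ (b): The strategy is to prove first the big-$O$ version $\|\mathbb{T}(t)A^{-1}\|=O(t^{-1/\alpha})$ using Plancherel's theorem in the Hilbert space $X$, and then refine to little-$o$. Fix $x\in X$ and let $f(t)=\mathbb{T}(t)A^{-1}x$ for $t\ge 0$, extended by $0$ to $t<0$. On a suitable frequency scale, $f$ can be reconstructed from the boundary values of the resolvent: heuristically $\widehat f(s)\sim R(is,A)A^{-1}x$. Applying Plancherel's identity $\|f\|_{L^2(\R;X)}^2=(2\pi)^{-1}\|\widehat f\|_{L^2(\R;X)}^2$ together with the assumed growth $\|R(is,A)\|=O(|s|^\alpha)$ and the extra $A^{-1}$ factor produces an $L^2$-bound that, after a scaling/cutoff argument balancing $t^{-1/\alpha}$-sized intervals, upgrades to a pointwise estimate $\|\mathbb{T}(t)A^{-1}\|\lesssim t^{-1/\alpha}$. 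Interpolating this with the trivial contraction bound on $X$ transfers the decay to $\|\mathbb{T}(t)x\|$ for $x\in D(A)$.

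The main obstacle, and the heart of the Borichev--Tomilov argument, is this last step: using the Hilbert space Plancherel identity to convert a pointwise-in-$s$ resolvent estimate into a uniform-in-$t$ semigroup bound. In a general Banach space only a logarithmic loss is obtainable (Batty--Duyckaerts), so the argument genuinely exploits $X$ being Hilbert. The upgrade from $O(t^{-1/\alpha})$ to $o(t^{-1/\alpha})$ is then extracted by splitting $x\in D(A)$ as $x=A^{-1}y$ with $y$ in a dense subspace on which additional smoothness improves the high-frequency tail, and invoking the Riemann--Lebesgue-type vanishing of those contributions.
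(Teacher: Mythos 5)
The paper offers no proof of this statement: Theorem~\ref{thm:BorTom2010} is quoted directly from Borichev and Tomilov \cite[Thm.~2.4]{BorTom2010}, so there is no in-paper argument to measure yours against. Assessed on its own terms, your outline of the easier direction (b)$\Rightarrow$(a) (uniform boundedness to get $\|\mathbb{T}(t)A^{-1}\|=O(t^{-1/\alpha})$, then the truncated Laplace representation $R(is,A)=\int_0^\tau e^{-ist}\mathbb{T}(t)\,dt+e^{-is\tau}\mathbb{T}(\tau)R(is,A)$ with $\tau\sim|s|^\alpha$) and of the final upgrade from $O(t^{-1/\alpha})$ to $o(t^{-1/\alpha})$ (the faster rate $O(t^{-2/\alpha})$ on the dense set $A^{-1}D(A)$, plus uniform boundedness of $t^{1/\alpha}\mathbb{T}(t)A^{-1}$) is essentially correct.

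The genuine gap is in the central step of (a)$\Rightarrow$(b). The Plancherel identity $\int_0^\infty\|\mathbb{T}(t)A^{-1}x\|^2\,dt=\tfrac{1}{2\pi}\int_{\mathbb{R}}\|R(is,A)A^{-1}x\|^2\,ds$ cannot be used as you describe: since $R(is,A)A^{-1}=(is)^{-1}\bigl(R(is,A)+A^{-1}\bigr)$, the right-hand integrand is only $O(|s|^{2\alpha-2})\|x\|^2$, which is not integrable at infinity once $\alpha\ge 1/2$; and even where it is integrable it yields only $\|\mathbb{T}(t)A^{-1}\|=O(t^{-1/2})$, not $O(t^{-1/\alpha})$. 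The natural repair --- replace $A^{-1}$ by $A^{-k}$, use the weighted identities $\int_0^\infty t^{2m}\|\mathbb{T}(t)A^{-k}x\|^2\,dt=\tfrac{(m!)^2}{2\pi}\int_{\mathbb{R}}\|R(is,A)^{m+1}A^{-k}x\|^2\,ds$, and interpolate back via the moment inequality --- produces $O(t^{-1/\alpha+\varepsilon})$ for every $\varepsilon>0$ but always misses the endpoint, because square-integrability forces $k>(m+1)\alpha+\tfrac12$ while the target rate requires $m+\tfrac12=k/\alpha$, which are incompatible. Obtaining the sharp exponent is precisely the content of Borichev--Tomilov, and it needs an idea your sketch does not contain: a low/high-frequency splitting of the resolvent combined with a Phragm\'en--Lindel\"of (maximum-principle) argument in the right half-plane, or equivalently their quasi-multiplier machinery. ``A scaling/cutoff argument balancing $t^{-1/\alpha}$-sized intervals'' does not bridge this; as written, the step fails at the critical exponent.
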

If $A: D(A) \subset X \to X$ is the generator of a $C_0$-semigroup $(\mathbb{T}(t))_{t \geq 0}$ on a Hilbert space $X$, then the orbits of this semigroup are the solutions of the abstract Cauchy problem \eqref{pro:cauchy}.
In particular, 
item (b) in Theorem~\ref{thm:BorTom2010} is equivalent to $\|z(t)\| = o(t^{-1/\alpha})$ as $t\to\infty$ for every classical solution of \eqref{pro:cauchy}.

\section{Polynomial stability of the wind turbine tower model in the fore-aft plane}\label{Sec:fore-aft}
In this section we study the stability of the wind turbine tower $\Sigma_{\text{sc}}$. 
We recall that the mathematical model of $\Sigma_{\text{sc}}$ has the form
\begin{align}\label{sigmasc}
		\begin{cases}
	\rho(x) w_{tt}(x,t)=-(EI(x)w_{xx}(x,t))_{xx}, \quad(x,t)\in (0,1)\times [0,\infty),\\
		w(0,t)=0, \quad w_x(0,t)=0,\\
		mw_{tt}(1,t)-(EIw_{xx})_x(1,t)=u_1(t),\\
		Jw_{xtt}(1,t)+EI(1)w_{xx}(1,t)=u_2(t),
	\end{cases}
	\end{align}
where $\rho$ and $EI$ belong to $C^4[0,1]$ and are strictly positive. The constants $m > 0$ and $J > 0$ are positive.
The natural state of $\Sigma_{\text{sc}}$  at  time $t$ is
\begin{align}\label{statesigma}
	z(t)= (z_1(t),z_2(t),z_3(t),z_4(t))^T
= (w(\cdot,t), w_t(\cdot,t), w_t(1,t), w_{xt}(1,t))^T,
\end{align}
where $z_1$ and $z_2$ are the transverse displacement and the transverse velocity of the tower, respectively and where $z_3$ and $z_4$ are the velocity and the angular velocity of the nacelle, respectively.
The energy state space of the solutions of $\Sigma_{\text{sc}}$ is 
	$X_{\text{sc}}=H^2_l(0,1)\times L^2(0,1)\times \mathbb{C}^2$, where $H^2_l(0,1)=\{f\in H^2(0,1): \; f(0)=f'(0)=0\}$.
We equip the Hilbert space $X_{\text{sc}}$ with the norm defined by
\begin{align}
\label{eq:Xscnorm}
	\|(f,g,\xi,\eta)\|_{X_\text{sc}}^2=\int_0^1\left[EI(x)|f''(x)|^2+\rho(x)|g(x)|^2\right]dx+m|\xi|^2+J|\eta|^2.
\end{align}
The wind turbine model $\Sigma_{\text{sc}}$ can be rewritten in the form 
\begin{subequations}
\label{dessigma}
\begin{align}
	\dot z(t)&=A_{\text{sc}}z(t)+B_\text{sc}v(t), \qquad z(0)=z_0\\
	y(t)&= C_{\text{sc}}z(t)
\label{dessigma_output}
\end{align}
\end{subequations}
on $X_{\text{sc}}$.
Here $B_{\text{sc}}$ is the control operator and $v$ is the control input which is chosen based on the type of control applied to the nacelle. 
Similarly, the measurement $y$
which will be used in the feedback stabilization 
 and the corresponding $C_{\text{sc}}$ 
 are determined by the control scenario.
The main operator
$A_{\text{sc}}: D(A_{\text{sc}})\subset X_{\text{sc}}\to X_{\text{sc}}$ is defined by
\begin{subequations}
\label{scolegenerator}
\begin{align} 
 &A_{\text{sc}}\begin{pmatrix}
			f\\g\\\xi\\\eta
		\end{pmatrix}	
			=\begin{pmatrix}
			g\\-\frac{1}{\rho(x)}(EIf'')''(x)\\\frac{1}{m}(EIf'')'(1)\\-\frac{1}{J}(EIf'')(1)
		\end{pmatrix},\\
&D(A_{sc})
=\left\{(f,g,\xi,\eta)\in \left(H^2_l(0,1)\cap H^4(0,1)\right)\times H^2_l(0,1)\times \mathbb{C}^2 \left\vert\begin{matrix} \xi=g(1)\\\eta=g'(1)\end{matrix}\right.\right\}.
\end{align}
\end{subequations}
We also note that the norm $\norm{z(t)}_{X_{\text{sc}}}^2$ is exactly 
 twice the physical energy of the solution of $\Sigma_{\text{sc}}$ at time $t$, that is, $E_{\text{sc}}(t)=\frac{1}{2}\|z(t)\|_{X_{\text{sc}}}^2$.

\subsection{Combined force and torque control}
In this section we consider the case where the wind turbine tower model $\Sigma_{\text{sc}}$ has two control inputs: the force $u_1$, generated by
 an electrically driven mass within the nacelle, and the electric torque $u_2$, produced by 
the generator. 
In this
 setting the control
  operator $B_{\text{sc}}\in \mathcal L(\C^2,X_{\text{sc}})$ and the control input $v$ are given by
\begin{gather}
	B_{\text sc}=
		\begin{pmatrix}
			0&0&\frac{1}{m}&0\\
			0&0&0&\frac{1}{J}
		\end{pmatrix}^T, \qquad v(t)=\begin{pmatrix}
			u_1(t)\\u_2(t)
		\end{pmatrix}.
\end{gather}
The output signal $y$ in~\eqref{dessigma_output} consists of two components, one proportional to the velocity and the second proportional to the angular velocity of the nacelle. More precisely,
we let $a,b>0$ and define the observation operator $C_{\text{sc}}\in \mathcal L(X_{\text{sc}},\C^2)$ as
\begin{align*}
	C_{\text {sc}}=
	\begin{pmatrix}
		0&0&a&0\\
		0&0&0&b
	\end{pmatrix}.
\end{align*}

The following theorem shows
 that the static feedback $v = -y$ stabilizes the system $\Sigma_{\text{sc}}$ polynomially. 
\begin{theorem}\label{SCOLEdecay}
	Let $a$, $b>0$. Assume that there exist $\varepsilon>0$, $\delta>0$ and
$\zeta\in C^2[0,1]$
 such that 
for all $x\in [0,1]$ we have
	\begin{gather}\label{eq1}
\begin{gathered}
	\zeta(0)=0, \quad 2(1-\varepsilon)\rho(x)-(\rho\zeta)'(x)<-\delta<0,\\
		EI(x)[(1-\varepsilon)-2\zeta'(x)]+\frac{1}{2}(EI\zeta)'(x)<-\delta<0.
\end{gathered}	
\end{gather}%
 Then the wind turbine tower model $\Sigma_{\text{sc}}$ described by \eqref{dessigma} is stabilised polynomially by the static output feedback $v=-y$. More precisely,
	the energy of every solution of
	\begin{align}\label{SCOLE2}
		\begin{cases}
			\rho(x) w_{tt}(x,t)=-(EI(x)w_{xx}(x,t))_{xx}, &(x,t)\in (0,1)\times [0,\infty),\\
			w(0,t)=0, \quad w_x(0,t)=0,\\
			mw_{tt}(1,t)-(EIw_{xx})_x(1,t)=-aw_t(1,t),\\
			Jw_{xtt}(1,t)+EI(1)w_{xx}(1,t)=-bw_{xt}(1,t),\\
		\end{cases}
	\end{align}
	satisfies $E_{\textup{sc}}(t)\to 0$ as $t\to\infty$, and for classical solutions we have $E_{\textup{sc}}(t)=o(t^{-1})$ as $t\to\infty$.
\end{theorem}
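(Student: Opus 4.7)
The strategy is to apply Theorem~\ref{thm1} to the closed-loop system~\eqref{SCOLE2} after writing it in the form~\eqref{oper:A}, obtain a polynomial resolvent bound $\norm{R(is,A)} = O(|s|^2)$ as $|s|\to\infty$, and then invoke Theorem~\ref{thm:BorTom2010} with $\alpha = 2$. Set $X_1 = H^2_l(0,1)\times L^2(0,1)$, $X_2 = \C^2$, and $U = \C^2$. Let $L_1$ be the beam operator acting on $z_1 = (w,w_t)^T$, and define $G_1 z_1 = (w_t(1),w_{xt}(1))^T$ and $K_1 z_1 = (-(EIw_{xx})_x(1),\,EI(1)w_{xx}(1))^T$, i.e.\ the collocated velocity/angular-velocity trace and the conjugate force/torque trace at $x=1$. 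Two integrations by parts produce the identity $\re\langle L_1 z_1, z_1\rangle_{X_1} = \re\langle G_1 z_1, K_1 z_1\rangle_U$, so $(G_1,L_1,K_1)$ is an impedance passive boundary node. The nacelle with feedback damping takes the form $\dot z_2 = A_2 z_2 - B_2 K_1 z_1$ with $A_2 = \diag(-a/m,-b/J)$, $B_2 = \diag(1/m,1/J)$, $C_2 = I$, $D_2 = 0$, a finite-dimensional impedance passive system. The closed-loop generator then coincides with~\eqref{oper:A}, and Theorem~\ref{thm1} delivers that $A$ generates a contraction semigroup on $X_{\text{sc}}$.

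The three ingredients required by the bound in Theorem~\ref{thm1} split into a trivial finite-dimensional part and a genuine PDE estimate. Since $A_2$ is a negative definite diagonal matrix, $i\R \subset \rho(A_2)$ and $\norm{R(is,A_2)}$ is uniformly bounded on $\R$. A direct computation gives
\begin{align*}
    \mathbb{H}_2(is) = \diag\!\left(\tfrac{1}{ims+a},\,\tfrac{1}{iJs+b}\right), \qquad \re\,\mathbb{H}_2(is) \geq \eta(s) I,
\end{align*}
with $\eta(s) = \min\!\bigl(\tfrac{a}{m^2 s^2+a^2},\,\tfrac{b}{J^2 s^2+b^2}\bigr) \asymp (1+s^2)^{-1}$ for $s\in\R$. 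The essential task is the estimate $\norm{R(is,A_0)} = O(1)$ with $i\R\subset \rho(A_0)$, where $A_0$ is the restriction of $L_1$ to $\ker(G_1+K_1)$, i.e.\ the Euler--Bernoulli beam with the clamped condition at $x=0$ and the fully dissipative collocated tip feedback $(EIw_{xx})_x(1) = w_t(1)$, $EI(1)w_{xx}(1) = -w_{xt}(1)$. I would prove that $A_0$ is exponentially stable by the multiplier method: test the resolvent equation $(is-A_0)z_1 = h$ against $\zeta(x)w_x$ and against $w$ itself, integrate by parts twice, and use the two inequalities in~\eqref{eq1} to show that the resulting interior quadratic form controls $\int_0^1\!(EI|w_{xx}|^2 + \rho|w_t|^2)\,dx$; the boundary contributions at $x=1$ generated by $\zeta(1)$ are absorbed by the dissipation encoded in $G_1 z_1 + K_1 z_1 = 0$, while $\zeta(0)=0$ kills all contributions at the clamped end. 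By Gearhart--Prüss, exponential stability of $A_0$ yields the desired $\norm{R(is,A_0)} = O(1)$.

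Combining these ingredients in Theorem~\ref{thm1} gives
\begin{align*}
    \norm{R(is,A)} \lesssim \frac{(1+\norm{R(is,A_0)})(1+\norm{R(is,A_2)}^2)}{\eta(s)} \lesssim 1+s^2, \qquad s\in\R.
\end{align*}
Applying Theorem~\ref{thm:BorTom2010} with $\alpha = 2$ yields $\norm{T(t)z_0}_{X_{\text{sc}}} = o(t^{-1/2})$ for every $z_0 \in D(A)$, hence $E_{\text{sc}}(t) = \tfrac{1}{2}\norm{z(t)}^2 = o(t^{-1})$ for every classical solution. For arbitrary $z_0 \in X_{\text{sc}}$, the inclusion $i\R\subset\rho(A)$ together with the boundedness of the semigroup gives strong stability via the Arendt--Batty--Lyubich--V\~u theorem, whence $E_{\text{sc}}(t)\to 0$ as $t\to\infty$.

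The main obstacle is the uniform resolvent estimate for the tip-damped beam $A_0$. The multiplier $\zeta(x)w_x$ generates several boundary terms at $x=1$ involving $w_{xx}(1)$ and $(EIw_{xx})_x(1)$ that must be carefully balanced against the feedback dissipation, and the two pointwise inequalities in \eqref{eq1} are precisely the structural conditions that ensure the interior form has the correct sign. Once this PDE estimate is in place, the assembly through the abstract theorems of Section~\ref{Sec:background} is routine.
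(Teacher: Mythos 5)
Your proposal is correct and follows essentially the same route as the paper: the same splitting into the boundary node $(G_1,L_1,K_1)$ and the finite-dimensional nacelle system, the same lower bound $\re\mathbb{H}_2(is)\gtrsim(1+s^2)^{-1}$, and the same assembly via Theorem~\ref{thm1}, Theorem~\ref{thm:BorTom2010} with $\alpha=2$, and the Arendt--Batty--Lyubich--V\~u theorem. The only difference is that the exponential stability of the tip-damped beam $A_0$ under condition~\eqref{eq1}, which you sketch via the multiplier $\zeta(x)w_x$, is exactly the step the paper outsources to the cited result of Chen et al.\ \cite[Sec.~4.2]{cheDel87}, and your sketch matches that argument.
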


\begin{rmk}
The role of condition \eqref{eq1} is to establish a resolvent estimate for $A_0$ in Theorem~\ref{thm1}.
We note that the condition \eqref{eq1} in particular holds when $\rho$ and $EI$ are constant. Indeed, in this case we can choose $\zeta(x)=2x$.
\end{rmk}

\begin{proof}[Proof of Theorem \ref{SCOLEdecay}]
Let $X_1:=H^2_l(0,1)\times L^2(0,1)$ with the norm defined by $\|(f,g)^T\|_{X_1}^2=\int_0^1EI(x)|f''(x)|^2dx+\int_0^1\rho(x)|g(x)|^2dx$, let $U=\C^2$ with the Euclidean norm, and 
let $X_2=\mathbb{C}^2$ with the norm 
    defined by $\|(z_1,z_2)^T\|_{X_2}^2=m|z_1|^2+J|z_2|^2$. We define the operators $L_1:D(L_1)\subset X_1\to X_1$ and $K_1,G_1\in\mathcal{L}(D(L_1),U)$ by
    \begin{align*}
      &L_1\begin{pmatrix}
        f\\g
      \end{pmatrix}=\begin{pmatrix}
        g\\
        -\frac{1}{\rho}(EIf'')''
      \end{pmatrix},\quad 
      &K_1\begin{pmatrix}
        f\\g
      \end{pmatrix}=\begin{pmatrix}
        -(EIf'')'(1)\\(EIf'')(1)
      \end{pmatrix},\quad G_1\begin{pmatrix}
        f\\g
      \end{pmatrix}=\begin{pmatrix}
        g(1)\\g'(1)
      \end{pmatrix},
    \end{align*}
    for all $(f,g)^T\in D(L_1)=\left(H^4(0,1)\cap H^2_l(0,1)\right)\times H^2_l(0,1)$. We define 
    \begin{align*}
      &A_2=\begin{pmatrix}
        -\frac{a}{m}&0\\
        0&-\frac{b}{J}
      \end{pmatrix}, \qquad
B_2=\begin{pmatrix}
        \frac{1}{m}&0\\
        0&\frac{1}{J}
      \end{pmatrix},
\qquad
 C_2=
\begin{pmatrix}
        1&0\\
        0&1
      \end{pmatrix},
    \end{align*}
  and $D_2=0\in \C^{2\times 2}$.
Then~\eqref{SCOLE2} can be recast as an abstract Cauchy problem of the form~\eqref{pro:cauchy} with state $z(t)=(w(\cdot,t),w_t(\cdot,t),w_t(1,t),w_{xt}(1,t))^T$ and with main operator $A$ of the form~\eqref{oper:A}. 
   The operator $G_1$ is surjective and, since $U$ is finite-dimensional,  $G_1$ is right-invertible.
    For all $(f,g)^T\in D(L_1)$ integration by parts yields 
    \begin{align*}
      \re  \left\langle L_1\begin{pmatrix}
        f\\g
      \end{pmatrix}, \begin{pmatrix}
        f\\g
      \end{pmatrix}\right\rangle_{X_1}&=\re  \Bigl(\langle EI g'',f''\rangle_{L^2}-\langle (EI f'')'',g\rangle_{L^2}\Bigr)\\
      &=\re \Bigl( -(EI f'')'(1)\overline{g(1)}+EI(1)  f''(1)\overline{g'(1)}\Bigr)\\
      &=\re\left\langle G_1\begin{pmatrix}
        f\\g
      \end{pmatrix}, K_1\begin{pmatrix}
        f\\g
      \end{pmatrix}\right\rangle_{U}.
    \end{align*}
In particular, the operator  $A_1 := {L_1}_{|\ker(G_1)}$ 
     is dissipative. Furthermore, the well-known Sobolev embedding theorems imply that the domain $D(A_1) =\ker(G_1)$
      is compactly embedded in $X_1$.  Consequently, $A_1$ has a compact resolvent and, in particular, it is maximally dissipative. Thus  
     $A_1$ generates a contraction semigroup on $X_1$
by the Lumer--Phillips theorem.  
These properties imply that $(G_1, L_1, K_1)$ is an impedance passive boundary node on $(U, X_1, U)$.  
     It is easy to see that 
$\re\langle A_2z+B_2u,z\rangle_{X_2} \leq \re\langle C_2z+D_2u,u\rangle_{U}$
for all $z\in X_2$, $u\in U$ 
  and 
therefore $(A_2,B_2,C_2, D_2)$
 is an impedance passive linear system on $(U,X_2,U)$. Moreover, with the condition \eqref{eq1}, it follows from \cite[Sec. 4.2]{cheDel87} that the restriction $A_0$ of $L_1$ to $\ker(G_1+K_1)$
  \begin{equation*}
    \begin{gathered}
      A_0=L_1,\\
   D(A_0)=\bigl\{(f,g)^T\in \left(H^4\cap H^2_l\right)\times H^2_l:  (EI f'')'(1)=g(1),\;(EI f'')(1)=-g'(1)\bigr\}
  \end{gathered}
  \end{equation*}
  generates an exponentially stable semigroup on $X_1$. 
Thus we have $i\mathbb{R}\subset \rho(A_0)$ and $\sup_{s\in\mathbb{R}}\|R(is,A_0)\|<\infty$. On the other hand, a direct computation shows that the transfer function $\mathbb{H}_2$ of $(A_2,B_2,C_2,D_2)$ satisfies
  \begin{align*}
    \re \mathbb{H}_2(is)=\frac{1}{\left(s^2-\frac{ab}{mJ}\right)^2+\left(\frac{b}{J}+\frac{a}{m}\right)^2s^2}\begin{pmatrix} \frac{a}{m}\left(s^2+\frac{b^2}{J^2}\right) &0\\ 0&\frac{b}{J}\left(s^2+\frac{a^2}{m^2}\right) \end{pmatrix}, \;s\in \R.
  \end{align*}
  Hence there exists $M>0$ such that $   \re \mathbb{H}_2(is)\geq \frac{M}{1+s^2}I_{\mathbb{C}^2}$
  for all $s\in\mathbb{R}$. Thus
 Theorem \ref{thm1} with $E=\R$ shows that the operator $A$ generates a contraction semigroup on $X_1\times X_2$ and that  $i\mathbb{R}\subset \rho(A)$ and $\|R(is,A)\|\lesssim 1+s^2$ for $s\in\mathbb{R}$.
  The claims therefore follow from the Arendt--Batty--Lyubich--Vu Theorem~\cite[Thm.~5.5.5(b)]{AreBat11book} and Theorem~\ref{thm:BorTom2010}.
  \end{proof} 

  \subsection{Torque control} 
In this section we consider the stabilization of the wind turbine tower model using only torque control, i.e., when $u_1\equiv 0$ in~\eqref{sigmasc}.
We use the boundary measurement $y(t)=bw_{xt}(1,t)$ with $b>0$ in the feedback.
In this case 
the control input is $v=u_2$ and
 the control operator and the observation operator in~\eqref{scolegenerator} are  
      $B_{\text sc}=(0,0,0,\frac{1}{J})^T$ and
$C_{\text sc}=( 0,0,0,b)$.
Our main result is presented below.
\begin{theorem}\label{thm:torque}
Assume that $b>0$.
  The wind turbine tower model $\Sigma_{\text{sc}}$ described by \eqref{dessigma} is stabilised polynomially by the static torque feedback $v=-y$ from the angular velocity of the nacelle. More precisely,
    the energy of every solution of the system \eqref{SCOLE2}, with $a=0$, satisfies $E_{\textup{sc}}(t)\to 0$ as $t\to\infty$, and for classical solutions we have $E_{\textup{sc}}(t)=o(t^{-1})$ as $t\to\infty$.
  \end{theorem}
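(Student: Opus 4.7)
My plan is to apply Theorem~\ref{thm1} in the same spirit as the proof of Theorem~\ref{SCOLEdecay}, after reformulating the torque-only closed loop with a scalar input/output space $U=\mathbb{C}$ and with the tip-mass equation absorbed into the ``tower'' component. Concretely, I take $X_1=H^2_l(0,1)\times L^2(0,1)\times\mathbb{C}$ with norm
$\|(f,g,\xi)\|_{X_1}^2=\int_0^1 EI|f''|^2+\int_0^1 \rho|g|^2+m|\xi|^2$
and $X_2=\mathbb{C}$ with norm $J|\cdot|^2$, and define
\begin{align*}
L_1(f,g,\xi)&=\bigl(g,\,-\rho\inv(EIf'')'',\,m\inv(EIf'')'(1)\bigr),\\
D(L_1)&=\{(f,g,\xi)\in (H^4(0,1)\cap H^2_l(0,1))\times H^2_l(0,1)\times\mathbb{C}:\xi=g(1)\},
\end{align*}
together with $K_1(f,g,\xi)=(EIf'')(1)$ and $G_1(f,g,\xi)=g'(1)$, and take $(A_2,B_2,C_2,D_2)=(-b/J,\,1/J,\,1,\,0)$. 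A direct verification shows that \eqref{SCOLE2} with $a=0$ matches the Cauchy problem \eqref{pro:cauchy} for $A$ of the form \eqref{oper:A} and state $z(t)$ as in \eqref{statesigma}.

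\textbf{Verification of the hypotheses.} Integrating by parts in $X_1$, using $g(0)=g'(0)=0$ and using the constraint $\xi=g(1)$ to cancel the $(EIf'')'(1)\overline{g(1)}$ boundary term produced by the $\xi$-component of $L_1$, one obtains the identity
$\re\langle L_1 z_1,z_1\rangle_{X_1}=\re\bigl[(EIf'')(1)\overline{g'(1)}\bigr]=\re\langle G_1 z_1,K_1 z_1\rangle_U$.
Together with the surjectivity of $G_1$ (hence right-invertibility, since $U$ is finite-dimensional) and the Lumer--Phillips/compact-resolvent argument used in Theorem~\ref{SCOLEdecay}, this shows that $(G_1,L_1,K_1)$ is an impedance passive boundary node on $(U,X_1,U)$ in the sense of Definition~\ref{def:IPSys}. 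Impedance passivity of the scalar tuple $(A_2,B_2,C_2,D_2)$ is immediate. The transfer function is $\mathbb{H}_2(\lambda)=1/(J\lambda+b)$, so
$\re\mathbb{H}_2(is)=b/(b^2+J^2s^2)\gtrsim (1+s^2)\inv$,
and one may take $\eta(s)=(1+s^2)\inv$ and $E=\mathbb{R}$.

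\textbf{Main obstacle and conclusion.} The decisive step is the analysis of $A_0:=L_1|_{\ker(G_1+K_1)}$, which corresponds to the clamped Euler--Bernoulli beam with tip mass $m>0$ (but no rotational inertia) at $x=1$ and unit-coefficient torque feedback $(EIw_{xx})(1)=-w_{xt}(1)$. We must prove $i\mathbb{R}\subset\rho(A_0)$ and $\sup_{s\in\mathbb{R}}\|R(is,A_0)\|<\infty$, i.e.\ that $A_0$ is exponentially stable. Absence of imaginary eigenvalues is routine: if $(is-A_0)(f,g,\xi)=0$, the torque boundary condition combined with $g=isf$ forces $f'(1)=f''(1)=0$, and unique continuation for the ODE $(EIf'')''=\rho s^2 f$ with Cauchy data at $x=1$ yields $f\equiv 0$. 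The uniform resolvent bound on $i\mathbb{R}$ is the crucial analytical point, and I would obtain it from the multiplier/energy estimates of~\cite[Sec.~4.2]{cheDel87} (as was done for the combined-feedback operator in the proof of Theorem~\ref{SCOLEdecay}), exploiting the fact that, with no rotational inertia, the torque feedback acts as a first-order-in-time damping on the slope $w_x(1,t)$. Given this, Theorem~\ref{thm1} yields $\|R(is,A)\|\lesssim 1+s^2$ for $s\in\mathbb{R}$; the Arendt--Batty--Lyubich--Vu theorem then gives $E_\textup{sc}(t)\to 0$ and Theorem~\ref{thm:BorTom2010} with $\alpha=2$ gives $E_\textup{sc}(t)=o(t^{-1})$ for classical solutions, concluding the proof in the same way as in Theorem~\ref{SCOLEdecay}.
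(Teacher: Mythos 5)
Your decomposition is exactly the one the paper uses: the same spaces $X_1=H^2_l(0,1)\times L^2(0,1)\times\mathbb{C}$ and $X_2=\mathbb{C}$, the same operators $(G_1,L_1,K_1)$ and $(A_2,B_2,C_2,D_2)=(-b/J,1/J,1,0)$, the same integration-by-parts identity, the same lower bound $\re\mathbb{H}_2(is)\gtrsim(1+s^2)^{-1}$, and the same conclusion via Theorem~\ref{thm1}, the Arendt--Batty--Lyubich--Vu theorem and Theorem~\ref{thm:BorTom2010}. So the architecture is correct and matches the paper.

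The gap is in the step you yourself flag as decisive: the uniform boundedness of $\|R(is,A_0)\|$ on $i\mathbb{R}$, where $A_0$ is the clamped beam carrying a tip mass $m>0$ (no rotational inertia) with the torque feedback $(EIw_{xx})(1)=-w_{xt}(1)$. You propose to extract this from the multiplier estimates of \cite[Sec.~4.2]{cheDel87}, ``as was done for the combined-feedback operator in the proof of Theorem~\ref{SCOLEdecay}.'' That reference does not cover this configuration: in the proof of Theorem~\ref{SCOLEdecay} it is applied to the beam \emph{without} a tip mass and with \emph{both} force and torque damping at $x=1$, and even there it requires the multiplier hypothesis \eqref{eq1} on $\rho$ and $EI$ --- a hypothesis which Theorem~\ref{thm:torque} does not assume and which your argument would therefore illegitimately import. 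The operator $A_0$ arising here is a genuinely different hybrid system (point mass in the dynamics at $x=1$, damping only through the bending moment), and its exponential stability is a nontrivial known result; the paper invokes \cite[Thm.~4.2]{CheWan2007} for precisely this system, with no condition beyond positivity of the physical parameters. Without that (or an actual multiplier/frequency-domain proof adapted to the tip-mass, torque-only boundary conditions), your argument is incomplete at its central point. A smaller inaccuracy: your eigenvalue argument gives only $f'(1)=f''(1)=0$ at $x=1$, which is two of the four Cauchy data for the fourth-order ODE $(EIf'')''=\rho s^2f$ (with $f(1)$ and $(EIf'')'(1)=-ms^2f(1)$ still free), so ``unique continuation from Cauchy data at $x=1$'' does not immediately apply; one must argue with the full boundary value problem including the clamped conditions at $x=0$. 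This too is subsumed in the citation the paper uses.
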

  \begin{proof}
  We consider the space  $X_1:=H_l^2(0,1)\times L^2(0,1)\times \mathbb{C}$  with the norm defined by $ \|(f,g,h)^T\|_{X_1}^2=\int_0^1EI(x)|f''(x)|^2dx+\int_0^1\rho(x)|g(x)|^2dx+m|h|^2$.
 
Define the space $U=\C$ with the Euclidean norm and define $X_2=\C$ with the 
 norm $\|z\|_{X_2}^2=J|z|^2$. We define the operators $L_1:D(L_1)\subset X_1\to X_1$ and $K_1,G_1\in\mathcal{L}(D(L_1), U)$ by
  \begin{align*}
    &L_1\begin{pmatrix}
      f\\g\\h
    \end{pmatrix}=\begin{pmatrix}
      g\\
      -\frac{1}{\rho}(EIf'')''\\
      \frac{1}{m}(EIf'')'(1)
    \end{pmatrix},\quad 
    &K_1\begin{pmatrix}
      f\\g\\h
    \end{pmatrix}=(EIf'')(1),\quad G_1\begin{pmatrix}
      f\\g\\h
    \end{pmatrix}=g'(1),
  \end{align*}
  for all $(f,g,h)^T\in D(L_1)=\left\{(f,g,h)^T\in \left(H^4\cap H_l^2\right)\times H^2_l\times \mathbb{C}:\;h=g(1)\right\}$. 
Moreover, we define $A_2=-b/J\in \Lin(X_2)$, $B_2 = 1/J\in \Lin(U,X_2)$, $C_2=I\in \Lin(X_2,U)$, and $D_2=0\in \Lin(U)$.
Then~\eqref{SCOLE2} with $a=0$ can be formulated as an abstract Cauchy problem of the form~\eqref{pro:cauchy} with state $z(t)=(w(\cdot,t),w_t(\cdot,t),w_t(1,t),w_{xt}(1,t))^T$ and with the main operator $A$ of the form~\eqref{oper:A}.

To show that $(G_1,L_1,K_1)$ is an impedance passive boundary node, we note that
 $G_1$ is surjective, and since  $U$ is finite-dimensional, $G_1$ is right-invertible. Using integration by parts twice we get that for all $(f,g,h)^T\in D(L_1)$,
  \begin{align*}
    \re \left\langle L_1\begin{pmatrix}
      f\\g\\h
    \end{pmatrix}, \begin{pmatrix}
    f\\g\\h
  \end{pmatrix}\right\rangle_{X_1}&
=\re \Bigl(\langle EI g'',f''\rangle_{L^2}-\langle (EI f'')'',g\rangle_{L^2}+ (EIf'')'(1)\overline{h}\Bigr)\\
  & =\re \Bigl[(EIf'')(1)\overline{g'(1)}\Bigr]
=\re \left\langle G_1\begin{pmatrix}
    f\\g\\h
  \end{pmatrix}, K_1\begin{pmatrix}
  f\\g\\h
  \end{pmatrix}\right\rangle_{U}.
  \end{align*}
In particular
the restriction  $A_1$ of $L_1$ to $\ker G_1$ is dissipative. Furthermore, the Sobolev embedding theorem implies that the domain \( D(A_1) = \ker G_1 \)  is compactly embedded in \( X_1 \). 
  Consequently, \( A_1 \) has a compact resolvent and, in particular, 
is
 maximally dissipative. 
Thus \( A_1 \) generates a contraction semigroup on \( X_1 \) by the Lumer--Phillips theorem.
This completes the proof that  \( (G_1, L_1, K_1) \) is an impedance passive boundary node on $(U, X_1, U)$. Moreover, the restriction $A_0$ of $L_1$ to $\ker(G_1+K_1)$
  \begin{gather*}
    A_0=L_1,\\
    \mbox{ $\displaystyle D(A_0)
    =\left\{(f,g,h)^T\in\left(H^4\cap H_l^2\right)\times H^2_l\times \mathbb{C}: \; h=g(1),\;-(EIf'')(1)=g'(1)\right\}$}
  \end{gather*}
  generates an exponentially stable semigroup on $X_1$ by \cite[Thm. 4.2]{CheWan2007} and thus we have $i\mathbb{R}\subset \rho(A_0)$ and $\sup_{s\in \R}\|R(is,A_0)\|<\infty$. 
It is easy to check that
$(A_2,B_2,C_2,D_2)$
 is an impedance passive linear system on $(U,X_2,U)$ and its transfer function is given by $\mathbb{H}_2(is)=\frac{1}{is+b/J}$,
  and 
$\re \mathbb{H}_2(is)=\frac{b}{J}\bigl(\bigl(\frac{b}{J}\bigr)^2+s^2\bigr)\inv$.
Thus
 there exists $M>0$ such that 
$\re \mathbb{H}_2(is)\geq M(1+s^2)\inv$
 for all $s\in\mathbb{R}$. Hence, it follows from Theorem \ref{thm1} that the operator $A$ generates a contraction semigroup on $X_1\times X_2$ such that $i\mathbb{R}\subset \rho(A)$ and $\|R(is,A)\|\lesssim 1+s^2$ for $s\in\mathbb{R}$.
  The claims therefore follow from the Arendt--Batty--Lyubich--Vu Theorem~\cite[Thm.~5.5.5(b)]{AreBat11book} and Theorem \ref{thm:BorTom2010}.
  \end{proof}  

  \subsection{Force control} 
In this section we consider the stabilization using only force control, i.e., when $u_2\equiv 0$ in~\eqref{sigmasc}.
We use the boundary measurement $y(t)=aw_t(1,t)$ with $a>0$ in the feedback.
The control input is $v=u_1$ and
 the control operator  and the observation operator in~\eqref{scolegenerator} can be chosen as 
  \begin{align*}
    B_{\text {sc}}=\begin{pmatrix}
      0&0&\frac{1}{m}&0
    \end{pmatrix}^T,  \qquad  C_{\text {sc}}=\begin{pmatrix}
      0&0&a&0
    \end{pmatrix}.
  \end{align*}
  In this case we assume that \( \Sigma_{\text{sc}} \) is uniform, i.e., \( EI \) and \( \rho \) in \eqref{sigmasc} are strictly positive constants. 
The proof of our main result below
 is based on Theorem \ref{thm1}.
The role of condition \eqref{cond} is to guarantee that the resolvent of $A_0$ in Theorem~\ref{thm1} exists and is uniformly bounded on $i\R$.
We note that
if the condition~\eqref{cond} is not satisfied, then $A_0$ has at least one spectral point on $i\R$, see \cite[Rem.~4]{che2003}.

  \begin{theorem}\label{thm:force}
Assume that $a>0$, that $EI>0$ and $\rho>0$  in \eqref{sigmasc} are constant and that
    \begin{align}\label{cond}
      J\frac{EI}{\rho}\neq\frac{\sinh(k\pi)}{(k\pi)^3\left(\cosh(k\pi)-(-1)^k\right)}, \quad k\in \mathbb{N}.
    \end{align}
    Then the wind turbine tower model $\Sigma_{\text{sc}}$ described by \eqref{dessigma} is  stabilised polynomially by the static force feedback $v=-y$ from the velocity of the nacelle. More precisely,
    the energy of every solution of the system \eqref{SCOLE2}, with $b=0$, satisfies $E_{\textup{sc}}(t)\to 0$ as $t\to\infty$, and for classical solutions we have $E_{\textup{sc}}(t)=o(t^{-1})$ as $t\to\infty$.
  \end{theorem}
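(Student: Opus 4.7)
The plan is to mirror the strategy used in Theorem \ref{thm:torque}, swapping the roles of the nacelle's velocity and angular velocity, and then to invoke Theorem \ref{thm1} together with Theorem \ref{thm:BorTom2010} and the Arendt-Batty-Lyubich-Vu theorem. Concretely, I set $X_1=H_l^2(0,1)\times L^2(0,1)\times\C$ with norm $\|(f,g,h)^T\|_{X_1}^2=\int_0^1 EI|f''|^2\,dx+\int_0^1\rho|g|^2\,dx+J|h|^2$ so that the third coordinate now carries the angular velocity $w_{xt}(1,t)$; I set $U=\C$ and $X_2=\C$ with $\|z\|_{X_2}^2=m|z|^2$ for the nacelle velocity $w_t(1,t)$; and I define $L_1(f,g,h)^T=(g,-\rho^{-1}(EIf'')'',-J^{-1}(EIf'')(1))^T$ on $D(L_1)=\{(f,g,h)\in(H^4\cap H_l^2)\times H_l^2\times\C:h=g'(1)\}$, together with $K_1(f,g,h)^T=-(EIf'')'(1)$ and $G_1(f,g,h)^T=g(1)$. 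Finally, $A_2=-a/m$, $B_2=1/m$, $C_2=1$, $D_2=0$. The closed-loop system \eqref{SCOLE2} with $b=0$ then takes the abstract form \eqref{pro:cauchy} with $A$ as in \eqref{oper:A}.

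The boundary-node and passivity hypotheses are routine and parallel the torque case. The operator $G_1$ is surjective and, since $U$ is finite-dimensional, right-invertible. Integration by parts twice, using the clamped conditions at $x=0$ and the trace relation $h=g'(1)$, yields $\re\langle L_1(f,g,h)^T,(f,g,h)^T\rangle_{X_1}=\re[-(EIf'')'(1)\overline{g(1)}]=\re\langle G_1(f,g,h)^T,K_1(f,g,h)^T\rangle_U$, which verifies condition (iv) of Definition \ref{def:IPSys} and shows that $A_1:=L_1|_{\ker G_1}$ is dissipative. Since $D(A_1)$ embeds compactly in $X_1$ by the Sobolev embedding theorem, $A_1$ has compact resolvent and hence is maximally dissipative, so Lumer-Phillips provides the required contraction semigroup and $(G_1,L_1,K_1)$ is an impedance passive boundary node on $(U,X_1,U)$. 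A direct computation shows that $(A_2,B_2,C_2,D_2)$ is an impedance passive linear system on $(U,X_2,U)$ with transfer function $\mathbb{H}_2(is)=(a+ims)^{-1}$, whence $\re \mathbb{H}_2(is)=a(a^2+m^2s^2)^{-1}\gtrsim (1+s^2)^{-1}$ for all $s\in\R$.

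The main obstacle compared to the torque case is the spectral analysis of the auxiliary operator $A_0=L_1|_{\ker(G_1+K_1)}$, which imposes the boundary coupling $g(1)=(EIf'')'(1)$ at $x=1$. This is precisely where the hypothesis \eqref{cond} enters. In the uniform case $EI,\rho\equiv\text{const}$, the analysis of \cite{che2003} identifies the explicit resonance values of $JEI/\rho$ at which $A_0$ admits purely imaginary eigenvalues; condition \eqref{cond} is designed to rule out all of these values. By \cite[Rem.~4 and the surrounding analysis]{che2003}, under \eqref{cond} one therefore has $i\R\subset\rho(A_0)$ and in fact exponential stability of the semigroup generated by $A_0$, and in particular $\sup_{s\in\R}\|R(is,A_0)\|<\infty$.

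With these ingredients assembled, Theorem \ref{thm1} applied with $E=\R$ gives that $A$ generates a contraction semigroup on $X_1\times X_2$, that $i\R\subset\rho(A)$, and that $\|R(is,A)\|\lesssim 1+s^2$ as $|s|\to\infty$. The Arendt-Batty-Lyubich-Vu theorem then produces $E_{\text{sc}}(t)\to 0$ as $t\to\infty$ for every mild solution, and Theorem \ref{thm:BorTom2010} applied with $\alpha=2$ delivers $E_{\text{sc}}(t)=o(t^{-1})$ for every classical solution, completing the proof.
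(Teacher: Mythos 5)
Your proposal is correct and follows essentially the same route as the paper: the same splitting of the state space with the angular velocity absorbed into $X_1$ and the nacelle velocity in $X_2=\C$, the same boundary node $(G_1,L_1,K_1)$ and passive system $(A_2,B_2,C_2,D_2)$, the appeal to \cite{che2003} for exponential stability of $A_0$ under \eqref{cond}, and the conclusion via Theorem~\ref{thm1}, the Arendt--Batty--Lyubich--Vu theorem, and Theorem~\ref{thm:BorTom2010}. (Your transfer function $\mathbb{H}_2(is)=(a+ims)^{-1}$ is in fact the correct normalization, but this makes no difference to the lower bound $\re\mathbb{H}_2(is)\gtrsim(1+s^2)^{-1}$.)
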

  \begin{proof}
  We consider the space  $X_1:=H_l^2(0,1)\times L^2(0,1)\times \mathbb{C}$ equipped with the norm
 $ \|(f,g,h)^T\|_{X_1}^2=EI\int_0^1|f''(x)|^2dx+\rho\int_0^1|g(x)|^2dx+J|h|^2.$
    We define $U=\mathbb{C}$ with the Euclidean norm and $X_2=\C$ with the 
 norm $\|z\|_{X_2}^2=m|z|^2$.	 We define  $L_1:D(L_1)\subset X_1\to X_1$ and $K_1,G_1\in\mathcal{L}(D(L_1),U)$ by
    \begin{align*}
      &L_1\begin{pmatrix}
        f\\g\\h
      \end{pmatrix}=\begin{pmatrix}
        g\\
        -\frac{EI}{\rho}f''''\\
        -\frac{EI}{J}f''(1)
      \end{pmatrix},\;
      &K_1\begin{pmatrix}
        f\\g\\h
      \end{pmatrix}=-EIf'''(1),\quad G_1\begin{pmatrix}
        f\\g\\h
      \end{pmatrix}=g(1),
    \end{align*}
    \sloppy for all $(f,g,h)^T\in D(L_1)=\left\{(f,g,h)^T\in\left(H^4\cap H_l^2\right)\times H^2_l\times \mathbb{C}:\; h=g'(1)\right\}$. 
Moreover, $A_2=-a/m\in \Lin(X_2)$, $B_2=1/m\in \Lin(U,X_2)$, $C_2=I\in \Lin(X_2,U)$ and $D_2=0\in \Lin(U)$.
Then~\eqref{SCOLE2} with $b=0$ can be formulated as an abstract Cauchy problem of the form~\eqref{pro:cauchy} with state $z(t)=(w(\cdot,t),w_t(\cdot,t),w_{xt}(1,t),w_{t}(1,t))^T$ and with the main operator $A$ of the form~\eqref{oper:A}. 
    It is clear that the operator $G_1$ is surjective and since $U$ is finite-dimensional then $G_1$ is right invertible.  Using integration by parts twice, we get that for all $(f,g,h)^T\in D(L_1)$ 
  \begin{align*}
    \re \left\langle L_1\begin{pmatrix}
      f\\g\\h
    \end{pmatrix}, \begin{pmatrix}
    f\\g\\h
  \end{pmatrix}\right\rangle_{X_1}
&= \re \Bigl(EI\langle g'',f''\rangle_{L^2}-EI\langle  f'''',g\rangle_{L^2}
 -EI f''(1)\overline{h}\Bigr)\\
  & =\re \Bigl[-EI f'''(1)\overline{g(1)}\Bigr]
=\re \left\langle G_1\begin{pmatrix}
    f\\g\\h
  \end{pmatrix}, K_1
\begin{pmatrix}
  f\\g\\h
  \end{pmatrix}
\right\rangle_{U}.
  \end{align*}
In particular
 the restriction $A_1$ of $L_1$ to $\ker G_1$ is dissipative. Furthermore, the Sobolev embedding theorem implies that the domain $ D(A_1) = \ker G_1 $  is compactly embedded in $ X_1 $. 
  Consequently,  $A_1$ has a compact resolvent and, in particular, is maximally dissipative. Thus
   $A_1$ generates a contraction semigroup on $X_1$ by the Lumer--Phillips theorem. Hence, the system $(G_1, L_1, K_1)$ is an impedance passive boundary node on $(U, X_1, U)$. On the other hand, the restriction $A_0$ of  $L_1$ to $\ker (G_1+K_1)$ 
is given by
    \begin{gather*}
      \begin{gathered}
      A_0=L_1\\
       D(A_0)=\left\{(f,g,h)^T\in\left(H^4\cap H_l^2\right)\times H^2_l\times \mathbb{C}: \; h=g'(1), ~~EIf'''(1)=g(1)\right\}.
      \end{gathered}
    \end{gather*}
By \cite[Thm. 4]{che2003} this operator generates an exponentially stable semigroup on $X_1$ if (and only if) the condition \eqref{cond} holds. Thus $i\mathbb{R}\subset\rho(A_0)$ and $\sup_{s\in \R}\|R(is,A_0)\|<\infty$.
    Moreover,
$(A_2,B_2,C_2,D_2)$
 is an impedance passive linear system on $(U,X_2,U)$. Its transfer function 
     is $\mathbb{H}_2(is)=\frac{1}{is+a/m}$ and thus
$\re \mathbb{H}_2(is)=\frac{a}{m}\bigl(\bigl(\frac{a}{m}\bigr)^2+s^2\bigr)\inv$ for $s\in\R$.
     Hence there exists $M>0$ such that 
     $\re \mathbb{H}_2(is)\geq M(1+s^2)\inv$ for all $s\in\mathbb R$.  Theorem \ref{thm1} with $E=\R$ implies that the operator $A$ generates a contraction semigroup on $X_1\times X_2$ and that $i\mathbb{R}\subset \rho(A)$ and $\|R(is,A)\|\lesssim 1+s^2$ for  $s\in \mathbb{R}$.
  The claims therefore follow from the Arendt--Batty--Lyubich--Vu Theorem~\cite[Thm.~5.5.5(b)]{AreBat11book} and Theorem \ref{thm:BorTom2010}.
    \end{proof}

    \subsection{Tuned mass damper (TMD)}
    In this section we study the stabilization of the monopile wind turbine $\Sigma_{\text{sc}}$ using a tuned mass damper (TMD) system. The TMD consists 
    of a large mass placed either on the nacelle floor using wheels or racks or suspended above the floor using cables. The mass and spring are carefully tuned
     to a specific frequency, allowing the TMD mass to oscillate at that frequency. 
The damper then reduces vibrations by dissipating the system's energy as heat.
  
  The mathematical model $\Sigma_\text{TMD}$ for the monopile wind turbine tower $\Sigma_{\text{sc}}$ stabilized by a TMD in the fore–aft plane
has the form~\cite{TonZha2017,ZhaWei2017}
  \begin{align}\label{SCOLE3}
    \begin{cases}
      \rho(x) w_{tt}(x,t)=-(EI(x)w_{xx})_{xx}(x,t), \qquad(x,t)\in (0,1)\times [0,\infty),\\
      w(0,t)=0, \quad w_x(0,t)=0,\\
      mw_{tt}(1,t)-(EIw_{xx})_x(1,t)=k_1[p(t)-w(1,t)]+d_1[p_t(t)-w_t(1,t)],\\
      Jw_{xtt}(1,t)+EI(1)w_{xx}(1,t)=0,\\
      m_1p_{tt}(t)=k_1[w(1,t)-p(t)]+d_1[w_t(1,t)-p_t(t)],
    \end{cases}
  \end{align}
  where $m_1>0$, $k_1>0$ and $d_1>0$ are the mass, spring constant and damping coefficient of the TMD.
   Here $p(t)-w(1,t)$ describes the displacement of the mass component of the TMD with respect to the nacelle and $p_t(t)$ 
   describes the mass component's translational velocity with respect to the earth. The monopile wind turbine tower and the 
   TMD subsystems are interconnected through the translational velocity of the nacelle $w_t(1,t)$ and the 
  force $k_1(p(t)-w(1,t))+d_1(p_t(t)-w_t(1,t))$ generated by the spring and the damper in the TMD system.
  The system $\Sigma_{\text{TMD}}$ can be formulated as a Cauchy problem
 with the state
  \begin{align*}
      z(t)=\Bigl(w(\cdot,t), w_t(\cdot,t), w_t(1,t), w_{xt}(1,t), p(t)-w(1,t), p_t(t)\Bigr)^T.
  \end{align*}
  The natural energy state space of the solutions is 
    $\XTMD=H^2_l(0,1)\times L^2(0,1)\times\mathbb{C}^4$ equipped with the norm
  \begin{align*}
    \|f\|_{\XTMD}^2=\int_0^1\left[EI(x)|f_1''(x)|^2+\rho(x)|f_2(x)|^2\right]dx+m|f_3|^2+J|f_4|^2+k_1|f_5|^2+m_1|f_6|^2
  \end{align*}
  for $f=(f_1,f_2,f_3,f_4,f_5,f_6)^T\in \XTMD$. 
We note that the physical energy of the solution of~\eqref{SCOLE3} satisfies 
$E_{\textup{TMD}}(t)=\frac{1}{2}\|z(t)\|_{\XTMD}^2$.
The operator $A:D(A)\subset \XTMD\to \XTMD$ in the abstract Cauchy problem~\eqref{pro:cauchy} is defined by
\begin{subequations}
\label{A:TMD}
  \begin{align}
 &A\begin{pmatrix}
      f_1\\f_2\\f_3\\f_4\\f_5\\f_6
    \end{pmatrix}=
    \begin{pmatrix}
      f_2\\
      -\frac{1}{\rho}(EIf_1'')''\\
      \frac{1}{m}(EIf_1'')'(1)-\frac{d_1}{m}f_3+\frac{k_1}{m}f_5+\frac{d_1}{m}f_6\\
      -\frac{1}{J}(EIf_1'')(1)\\
      -f_3+f_6\\
      \frac{d_1}{m_1}f_3-\frac{k_1}{m_1}f_5-\frac{d_1}{m_1}f_6
    \end{pmatrix}\\[1ex]
 &   D(A)=\{(f_1,\ldots,f_6)^T\in \left(H^2_l\cap H^4\right)\times H^2_l\times \mathbb{C}^4: \;
f_4=f_2'(1),~f_3=f_2(1)\}.%
  \end{align}%
\end{subequations}%

Our main result below shows that if the parameters $EI$ and $\rho$ are constant, then the TMD stabilizes the wind turbine tower model polynomially.

  \begin{theorem}\label{thm:TMD}
    Assume that $EI$ and $\rho$ in \eqref{SCOLE3} are strictly positive constants and sastisfies
    \begin{align}
      J\frac{EI}{\rho}\neq\frac{\sinh(k\pi)}{(k\pi)^3\left(\cosh(k\pi)-(-1)^k\right)}, \quad k\in \mathbb{N}.
    \end{align}
    Then the energy of every solution of the monopile wind turbine with the TMD described by \eqref{SCOLE3} satisfies $E_{\textup{TMD}}(t)\to 0$ as $t\to\infty$, and for classical solutions we have $E_{\textup{TMD}}(t)=o(t^{-1})$ as $t\to\infty.$
  \end{theorem}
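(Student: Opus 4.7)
The plan is to mirror the proof of Theorem~\ref{thm:force} and apply Theorem~\ref{thm1}, decomposing \eqref{SCOLE3} so that the beam together with the nacelle rotation forms an impedance passive boundary node on $X_1$ and the nacelle translation coupled with the TMD forms a finite-dimensional impedance passive linear system on $X_2$. Concretely, I take $X_1 = H^2_l(0,1) \times L^2(0,1) \times \C$, $U = \C$, and the triple $(G_1, L_1, K_1)$ exactly as in the proof of Theorem~\ref{thm:force}, so that $G_1(f,g,h)^T = g(1)$ is the nacelle velocity and $K_1(f,g,h)^T = -EI f'''(1)$ is the boundary shear. For $X_2 = \C^3$ with state $z_2 = (w_t(1,\cdot), p - w(1,\cdot), p_t)^T$ and the energy norm $\|z_2\|_{X_2}^2 = m|z_{2,1}|^2 + k_1|z_{2,2}|^2 + m_1|z_{2,3}|^2$, I define $A_2 \in \Lin(X_2)$, $B_2 \in \Lin(U, X_2)$, $C_2 \in \Lin(X_2, U)$, $D_2 = 0$ by reading off the nacelle translation ODE together with the TMD equations, taking the shear as input and $z_{2,1}$ as output.

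A direct computation then shows that
\begin{align*}
\re \langle A_2 z_2 + B_2 u, z_2 \rangle_{X_2} - \re \langle C_2 z_2 + D_2 u, u \rangle_U = -d_1 |z_{2,1} - z_{2,3}|^2 \leq 0,
\end{align*}
so $(A_2, B_2, C_2, D_2)$ is impedance passive, and with these choices \eqref{SCOLE3} becomes the Cauchy problem \eqref{pro:cauchy} with $A$ of the form \eqref{oper:A}. The boundary node analysis is unchanged from Theorem~\ref{thm:force}: under the assumed spectral condition on $J$, $EI$, $\rho$, the restriction $A_0 = L_1|_{\ker(G_1+K_1)}$ generates an exponentially stable semigroup by \cite[Thm.~4]{che2003}, so $\sup_{s\in\R}\|R(is,A_0)\|<\infty$. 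The next step is to estimate $\mathbb{H}_2(\lambda) = C_2 R(\lambda,A_2) B_2$. The characteristic polynomial of $A_2$ factors as $\lambda \, q(\lambda)$ with $q$ having roots of strictly negative real part, so $\sigma(A_2)\cap i\R = \{0\}$, and a short calculation gives
\begin{align*}
\re \mathbb{H}_2(is) = \frac{d_1 m_1^2 s^4}{s^4 d_1^2(m+m_1)^2 + s^2 [k_1(m+m_1) - m m_1 s^2]^2}, \qquad s \in \R \setminus \{0\}.
\end{align*}
This is strictly positive and of order $s^{-2}$ as $|s| \to \infty$, so for any fixed $\epsilon > 0$ there is $M > 0$ with $\re \mathbb{H}_2(is) \geq M/(1+s^2)$ for $|s| \geq \epsilon$. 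Theorem~\ref{thm1} applied with $E = \{s \in \R: |s| \geq \epsilon\}$ and $\eta(s) = M/(1+s^2)$ then yields $iE \subset \rho(A)$ together with $\|R(is,A)\| \lesssim 1 + s^2$ on $E$.

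The main obstacle is the origin: $\re \mathbb{H}_2(0) = 0$ and $0 \in \sigma(A_2)$ prevent Theorem~\ref{thm1} from saying anything directly near $s = 0$, so I must argue separately that $0 \in \rho(A)$ and that $\|R(is,A)\|$ is bounded on $[-\epsilon,\epsilon]\cdot i$. Compact Sobolev embeddings imply that $D(A)$ is compactly embedded in the state space, and since Theorem~\ref{thm1} already guarantees that $A$ generates a contraction semigroup with $\rho(A)\neq\emptyset$, $A$ has compact resolvent. A direct computation of $\ker A$ using $Az = 0$ shows successively that $f_2 \equiv 0$, that $(EI f_1'')'' = 0$ together with the boundary conditions $f_1''(1) = 0$ and $(EI f_1'')'(1) = 0$ forces $f_1 \equiv 0$, and that the TMD algebraic relations force the remaining finite-dimensional components to vanish. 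Hence $0 \in \rho(A)$, so $\|R(is,A)\|$ is continuous and bounded on $[-\epsilon,\epsilon]\cdot i$. Combining with the earlier estimate gives $\|R(is,A)\| \lesssim 1 + s^2$ on $i\R$, and the claims follow from the Arendt--Batty--Lyubich--Vu Theorem~\cite[Thm.~5.5.5(b)]{AreBat11book} and Theorem~\ref{thm:BorTom2010}.
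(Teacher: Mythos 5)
Your argument is correct, but it follows a genuinely different route from the paper's. The paper keeps the \emph{entire} SCOLE operator $A_{\text{sc}}$ (beam, nacelle mass and nacelle rotation) as the first subsystem $(A_1,B_1,C_1,0)$ with $B_1$ the force input and $C_1=B_1^*$ the nacelle velocity, couples it to a two-dimensional TMD system with feedthrough $D_2=d_1$, and invokes the paper's own Theorem~\ref{thm:AbsCouplingResEst}; there the factor $1+s^2$ comes from $\norm{R(is,A_K)}\lesssim 1+s^2$, which is imported from Theorem~\ref{thm:force} (this is where the spectral condition on $J\,EI/\rho$ enters), while $\re \mathbb{H}_2(is)$ for the TMD block is bounded below by a \emph{constant} away from $s=0$. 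You instead reuse the boundary node $(G_1,L_1,K_1)$ of Theorem~\ref{thm:force} for the beam plus nacelle rotation and push the nacelle translation into a three-dimensional block together with the TMD, which lets you apply the imported Theorem~\ref{thm1} directly; now $\norm{R(is,A_0)}$ is bounded by exponential stability (\cite[Thm.~4]{che2003}, again via the spectral condition) and the factor $1+s^2$ comes from $\re\mathbb{H}_2(is)\sim d_1/(m^2s^2)$. I checked your dissipation identity $-d_1|z_{2,1}-z_{2,3}|^2$, the factorization $\lambda q(\lambda)$ of the characteristic polynomial with $q(\lambda)=\lambda^2+\tfrac{d_1(m+m_1)}{mm_1}\lambda+\tfrac{k_1(m+m_1)}{mm_1}$, and your formula for $\re\mathbb{H}_2(is)$; all are correct (your transfer function differs from the paper's because your second block contains the nacelle mass $m$). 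Both proofs must treat $s=0$ separately by a compact-resolvent and kernel computation, and yours does so correctly, though note two small points: to conclude $(EIf_1'')'(1)=0$ in the kernel computation you first need the TMD relations to give $f_3=f_5=f_6=0$, so the order of deductions is the reverse of what you wrote; and boundedness of $\norm{R(is,A)}$ on $i[-\epsilon,\epsilon]$ requires shrinking $\epsilon$ using the openness of $\rho(A)$ around $0$, exactly as in the paper's ``choosing $\epsilon>0$ sufficiently small'' step. What each approach buys: yours avoids the paper's new abstract coupling theorem entirely and makes the mechanism of the $t^{-1}$ rate transparent (the admittance of the nacelle--TMD assembly seen from the tower rolls off like $s^{-2}$), whereas the paper's version reuses Theorem~\ref{thm:force} as a black box and exercises Theorem~\ref{thm:AbsCouplingResEst}, which it needs anyway for the hydraulic model in Section~\ref{Sec:side-side}.
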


  \begin{proof}
    Let $X_1=X_\text{sc} = H^2_l(0,1)\times L^2(0,1)\times \mathbb{C}^2 $ with the inner product defined by~\eqref{eq:Xscnorm},
let $U=\mathbb{C}$ with the Euclidean norm
and define
 $A_1:D(A_1)\subset X_1\to X_1$  so that  $A_1=A_\text{sc}$
 in \eqref{scolegenerator}. Then the operator $A$ in \eqref{A:TMD} has the structure
    \begin{align*}
      A=\begin{pmatrix}
        A_1-B_1D_2C_1&B_1C_2\\
        -B_2C_1&A_2
      \end{pmatrix},
    \end{align*}
    where 
      $B_1=(0,0,\frac{1}{m},0)^T\in \Lin(U,X_1)$,  $C_1=(0,0,1,0)\in \Lin(X_1,U)$,
    \begin{gather*}
      A_2=\begin{pmatrix}
        0 &1\\
        -\frac{k_1}{m_1}&-\frac{d_1}{m_1}
      \end{pmatrix}, \qquad B_2=\begin{pmatrix}
        1\\-\frac{d_1}{m_1}
        \end{pmatrix}, \qquad C_2=\begin{pmatrix}
          k_1& d_1
      \end{pmatrix}, \qquad D_2=d_1.
    \end{gather*}
Since the $A_1$ is skew-adjoint with respect to the inner product on $X_1$ (see \cite[Prop. 1.1]{GuoIva2005}) and since $C_1=B_1^*\in \Lin(X_1,U)$, it is easy to check that $(A_1, B_1, C_1, D_1)$ with $D_1 = 0$ is an impedance passive linear system on $(U, X_1, U)$.
Define the space $X_2=\mathbb{C}^2$ with the norm $\|(z_1,z_2)^T\|_{X_2}^2 = k_1|z_1|^2 + m_1|z_2|^2$. Then we have
  \begin{align*}
  &\re \langle A_2z+B_2u,z\rangle_{X_2}=-d_1|z_2|^2+k_1\re \bigl(u\overline{z_1}\bigr)-d_1\re \bigl(u\overline{z_2}\bigr),\\	
  & \re \langle C_2z+D_2u,u\rangle_{U}=k_1\re \bigl(z_1\overline{u}\bigr)+d_1\re \bigl(z_2\overline{u}\bigr)+d_1|u|^2,
  \end{align*}
  for $z=(z_1,z_2)^T\in X_2$ and $u\in U$. We have $\re \langle A_2z+B_2u,z\rangle_{X_2}-\re \langle C_2z+D_2u,u\rangle_{U}=-d_1|z_2+u|^2\leq0$ 
and $(A_2,B_2,C_2,D_2)$ is an impedance passive linear system on $(U,X_2,U)$.
  Moreover, since $\det(\lambda - A_2) = \lambda^2 + \frac{d_1}{m_1} \lambda + \frac{k_1}{m_1}$ for $\lambda \in \C$, the Routh--Hurwitz implies that $\sigma(A_2) \subset \mathbb{C}_-$.
A direct computation shows that the transfer function of
   $(A_2,B_2,C_2,D_2)$ satisfies
\begin{align*}
  \re \mathbb{H}_2(is)=\frac{d_1m_1^2s^4}{m(d_1^2s^2+(k_1-m_1s^2)^2)},
\end{align*}
for all $s\in\mathbb{R}$.
For any given $\varepsilon>0$ there exists a constant $c_\varepsilon>0$ such that 
    $\re \mathbb{H}_2(is)\geq c_\varepsilon$ 
  for all $s\in\mathbb{R}\setminus(-\varepsilon, \varepsilon)$. 
Theorem \ref{thm:force} shows that if $K=I$, then
 the operator $A_K=A_1-B_1KC_1$  with domain $D(A_K)=D(A_1)$ 
 generates a contraction semigroup
$\mathbb T_K$ on $X_1$ and that $\norm{\mathbb T_Kz_0}^2=o(t\inv)$ as $t\to\infty$ for all $z_0\in D(A_1)$.
Theorem \ref{thm:BorTom2010} thus implies that
  $i\mathbb{R}\subset \rho(A_K)$ and $\|R(is,A_K)\|\lesssim 1+s^2$ for  $s\in\mathbb{R}$. It follows from Theorem \ref{thm:AbsCouplingResEst} with $E=\{s\in\mathbb R: \; |s|>\varepsilon\}$ that $A$ generates a contraction semigroup on $\XTMD$ and that $\left\{is\in \R: \; |s|>\varepsilon\right\}\subset \rho(A)$ and $\|R(is,A)\|\leq M_\varepsilon (1+s^2)$ for some $M_{\varepsilon}>0$ and for $|s|>\varepsilon$. 
We will now show that $0\in\rho(A)$. 
The Sobolev embedding theorem implies that $A$ has compact resolvents, and therefore it is sufficient to show that $\ker A=\{0\}$.
  Let $f=(f_1,f_2,f_3,f_4,f_5,f_6)^T\in D(A)$ such that ${A}f=0$. Then
  \begin{align}
        &f_2=0, \quad f_1''''=0,
\quad f_3=f_2(1), \quad f_4=f_2'(1)
\label{1}\\
      &-d_1f_3+k_1f_5+d_1f_6+EIf_1'''(1)=0\label{3}\\
      &f_1''(1)=0, \quad f_1(0)=f_1'(0)=f_2(0)=f_2'(0)=0\label{4}\\
      &-f_3+f_6=0, \quad d_1f_3-k_1f_5-d_1f_6=0\label{5}.
  \end{align}	
  Using \eqref{1}
 we get $f_4=f_3=0$ and then \eqref{5} implies that
 $f_6=f_5=0$ as well.
Moreover, \eqref{1} implies that $f_1$ is a polynomial of degree $3$, and using \eqref{4}
and the fact that \eqref{3} implies $f_1'''(1)=0$
 we get that $f_1=0$. Thus $0\in\rho(A)$.
  Choosing $\epsilon>0$ above sufficiently small we deduce that $i\mathbb{R}\subset \rho(A)$ and $\|R(is,A)\|\lesssim {1+s^2}$ for $s\in\mathbb{R}$.
  The claims therefore follow from the Arendt--Batty--Lyubich--Vu Theorem~\cite[Thm.~5.5.5(b)]{AreBat11book} and Theorem \ref{thm:BorTom2010}.
  \end{proof}

  \section{Polynomial stability of the wind turbine tower model in the side-side plane: Hydrostatic transmission}\label{Sec:side-side}
  To investigate the effect of the vibration torque transferred from the turbine to the nacelle by the hydraulic pump, we
 study a model~\cite{RajHsu2014, FarEla2018} of the wind turbine tower in the plane of the turbine blades. 
The model $\Sigma_{\text{hd}}$ consists of the nonuniform SCOLE model system coupled with a fluid power transmission model (see Figure \ref{figure02}) and has the form 
   \begin{figure}
    \centering
    \includegraphics[scale=0.5]{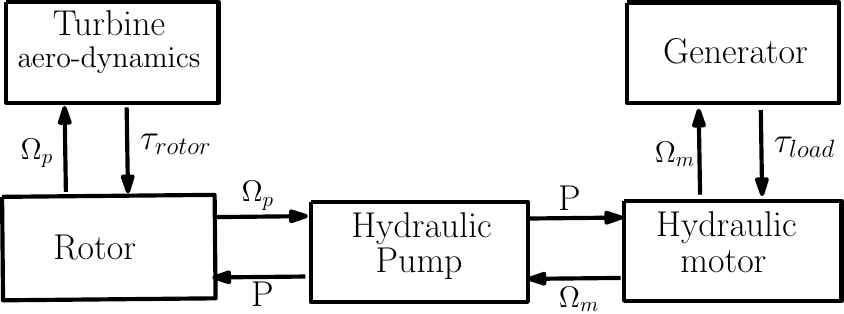}
    \caption{Flow diagram of the wind turbine with hydrostatic transmission.}\label{figure02}
  \end{figure}
has the form
\begin{subequations}
\label{eq:SCOLEHydro}
  \begin{numcases}{}
    \rho(x) w_{tt}(x,t)=-(EI(x)w_{xx})_{xx}(x,t), \quad(x,t)\in (0,1)\times [0,\infty),\label{0001}\\
      w(0,t)=0, \quad w_x(0,t)=0,\label{0002}\\
      mw_{tt}(1,t)-(EIw_{xx})_x(1,t)=0,\label{0003}\\
      Jw_{xtt}(1,t)+EI(1)w_{xx}(1,t)=D_pP(t)+B_p\Omega_p(t)+D_mP(t)\label{0004}\\
      \qquad\qquad\qquad \qquad\qquad \qquad\qquad-B_m \Omega_m(t)-\tau_{\text{load}}(t),\notag\\
      \dot \Omega_p(t)=\frac{1}{J_T}(\tau_{\text{rotor}}(t)-B_p\Omega_p(t)-D_pP(t))-w_{xtt}(1,t),\label{0005}\\
      \dot \Omega_m(t)=\frac{1}{J_G}(D_mP(t)-B_m\Omega_m(t)-\tau_{\text{load}}(t))+w_{xtt}(1,t),\label{0006}\\
      Q_p(t)=D_p\Omega_p(t)-k_{\text{leak},p}P(t),\label{0007}\\
      Q_m(t)=D_m\Omega_m(t)+k_{\text{leak},m}P(t),\label{0008}\\
      \dot P(t)=\frac{\beta}{V}(Q_p(t)-Q_m(t)).\label{0009}
  \end{numcases}
\end{subequations}
  The equations \eqref{0005}--\eqref{0009} describe the hydrostatic transmission system. More precisely, equation \eqref{0007} is the flow equation for the pump, where $Q_p$
   is the pump flow delivery, $D_p>0$ is the pump displacement, $k_{\text{leak,p}}\geq 0$ is the pump leakage coefficient, $P$ is the differential pressure across the pump,
    and $\Omega_p$ is the pump's angular velocity. Equation \eqref{0005} represents the torque balance between the wind turbine rotor and the pump, where the angular 
    velocity of the nacelle, $\omega_{xt}(1,t)$, is also taken into consideration. In this equation, the term $J_T \Omega_p$ represents the pump's inertial torque, 
    where $J_T>0$ is the total inertia of the wind turbine rotor and the pump. The term $B_p \Omega_p$ represents the friction torque, where $B_p$ is the damping 
    coefficient of the pump, and $D_p P$ is the pump reaction torque.
  
  Equation \eqref{0008} describes the hydraulic flow supplied to the hydraulic motor, where $Q_m$ is the motor flow delivery, $D_m>0$ is the motor displacement,
   $k_{\text{leak,m}}\geq0$ is the motor leakage coefficient, $P$ is the differential pressure across the motor, and $\Omega_m$ is the motor's angular velocity.
    Equation \eqref{0006} provides the balance between the driving torque and the braking torque for the motor shaft. The term $J_G \Omega_m$ is the motor's
     inertial torque, where $J_G$ is the total inertia of the motor rotor and the generator. The term $B_m \Omega_m$ represents the motor's friction torque,
      where $B_m$ is the damping coefficient of the motor, $\tau_{\text{load}}$ is the generator load torque, and $D_m P$ is the rotor torque.
  Finally, equation \eqref{0009} represents the hydraulic circuit equation, where $\frac{V}{\beta}$ is the fluid capacitance of the hydraulic circuits,
   $V$ is the total compressed volume from the hydraulic pump to the hydraulic motor, and $\beta$ is the effective bulk modulus of the hydraulic fluid.
  
In our analysis it is necessary to assume that at least one of the damping coefficients, either that of the pump, $B_p$, or that of the motor rotor, $B_m$, is nonzero. 
We will now reformulate the system $\Sigma_{\text{hd}}$ described by \eqref{eq:SCOLEHydro} as abstract Cauchy problem of the form~\eqref{pro:cauchy} with the state
\begin{align*}
 z(t)=     \bigl(w(\cdot,t),w_t(\cdot,t),w_t(1,t),w_{xt}(1,t),\Omega_p(t)+w_{xt}(1,t),\Omega_m(t)-w_{xt}(1,t),P(t)\bigr)^T.
\end{align*}
  The energy state space of the solutions of $\Sigma_{\text{hd}}$ is 
    $X_{\text{hd}}=H^2_l(0,1)\times L^2(0,1)\times \mathbb{C}^5$
equipped
  with the norm
  \begin{align*}
    \|f\|_{X_{\text {hd}}}^2&=\int_0^1\left[EI(x)|f_1''(x)|^2+\rho(x)|f_2(x)|^2\right]dx+m|f_3|^2+J|f_4|^2+J_T|f_5|^2\\
    &\quad\qquad+J_G|f_6|^2+\frac{\beta}{V}|f_7|^2
  \end{align*}
for $f= (f_1,f_2,f_3,f_4,f_5,f_6,f_7)^T\in \Xhd$.
  We note that the physical energy of the solution of  $\Sigma_{\text{hd}}$ satisfies $E_{\text{hd}}(t)=\frac{1}{2}\|z(t)\|_{X_{\text {hd}}}^2$.
  Denoting $k_{\text{leak}}:=k_{\text{leak},p}+k_{\text{leak},m}$, the main operator $A_{\text{hd}}$ of the abstract Cauchy problem is defined by
  \begin{align*}
     &A_{\text{hd}}f=\begin{pmatrix}
      f_2\\
      \frac{1}{\rho(x)}\left(EI(x)f''_1\right)''\\
      \frac{1}{m}\left(EIf''_1\right)'(1)\\
      -\frac{EI(1)}{J}f''_1(1)+\frac{B_p}{J}f_5-\frac{B_p+B_m}{J}f_4-\frac{B_m}{J}f_6+\frac{D_p+D_m}{J}f_7\\
      -\frac{B_p}{J_T}f_5+\frac{B_p}{J_T}f_4-\frac{D_p}{J_T}f_7\\
      \frac{D_m}{J_G}f_7-\frac{B_m}{J_G}f_6-\frac{B_m}{J_G}f_4\\
      \frac{\beta}{V}(D_pf_5-(D_p+D_m)f_4-D_mf_6-k_{\text{leak}}f_7)
       \end{pmatrix}\\[1ex]
    &D(A_{\text{hd}})=\bigl\{(f_1,\ldots,f_7)^T\in \left(H^2_l\cap H^4\right)\times H^2_l\times \mathbb C^5:\; f_2(1)=f_3, \;f_2'(1)=f_4\bigr\}.
  \end{align*}

Our main result below shows that in the case where the parameters of~\eqref{eq:SCOLEHydro} satisfy $J=J_G=J_T=1$ the system $\Sigma_{\text{hd}}$ 
without control input ($\tau_{\text{load}}(t)\equiv 0$) and without external disturbances ($\tau_{\text{rotor}}(t)\equiv 0$) 
is polynomially stable. 

  \begin{theorem}\label{thm43}
    Assume that $J=J_G=J_T=1$ and $B_p+B_m>0$. 
The hydraulic wind turbine model $\Sigma_{\text{hd}}$ described by \eqref{eq:SCOLEHydro}
 is polynomially stable. 
 More precisely, the energy of every solution corresponding to $\tau_{\text{rotor}}(t)\equiv 0$ and $\tau_{\text{load}}(t)\equiv 0$ satisfies $E_{\textup{hd}}(t)\to 0$ as $t\to 0$, and for classical solutions we have $E_{\textup{hd}}(t)=o(t^{-1})$ as $t\to\infty$.
  \end{theorem}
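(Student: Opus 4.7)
My plan is to apply Theorem~\ref{thm:AbsCouplingResEst} with $U=\C$ by decomposing $\Sigma_{\text{hd}}$ into the SCOLE beam-nacelle subsystem and the hydrostatic transmission. For the first subsystem I take $X_1=X_\text{sc}$, $A_1=A_\text{sc}$, $B_1=(0,0,0,1)^T$, $C_1=(0,0,0,1)$, and $D_1=0$; since $A_1$ is skew-adjoint and $C_1=B_1^\ast$, this tuple is impedance passive. The transmission subsystem lives on $X_2=\C^3$ with state $(f_5,f_6,f_7)$, where the change of variables $f_5=\Omega_p+w_{xt}(1)$ and $f_6=\Omega_m-w_{xt}(1)$ removes $w_{xtt}(1)$ from~\eqref{0005}--\eqref{0006}. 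With $J_T=J_G=1$, reading off $\Ahd$ identifies
\begin{equation*}
A_2=\begin{pmatrix}-B_p&0&-D_p\\0&-B_m&D_m\\\frac{\beta}{V}D_p&-\frac{\beta}{V}D_m&-\frac{\beta}{V}k_{\text{leak}}\end{pmatrix},\quad B_2=\begin{pmatrix}-B_p\\B_m\\\frac{\beta}{V}(D_p+D_m)\end{pmatrix},
\end{equation*}
together with $C_2=(B_p,-B_m,D_p+D_m)$ and $D_2=B_p+B_m$, on $X_2$ endowed with the weighted norm $\|z\|_{X_2}^2=|z_5|^2+|z_6|^2+\frac{V}{\beta}|z_7|^2$. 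A direct computation gives the dissipation identity
\begin{equation*}
\re\iprod{A_2z+B_2u}{z}_{X_2}-\re\iprod{C_2z+D_2u}{u}_\C=-B_p|z_5+u|^2-B_m|z_6-u|^2-k_{\text{leak}}|z_7|^2\leq 0,
\end{equation*}
so $(A_2,B_2,C_2,D_2)$ is impedance passive.

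Taking $K=1\in\Lin(\C)$, the operator $A_K=A_1-B_1C_1$ is the closed-loop SCOLE with unit torque feedback from the angular velocity of the nacelle, which by Theorem~\ref{thm:torque} (applied with $b=1$) satisfies $i\R\subset\rho(A_K)$ and $\|R(is,A_K)\|\lesssim 1+s^2$. Since $B_p+B_m>0$, the matrix $A_2$ is Hurwitz: dissipativity (the identity above with $u=0$) places $\sigma(A_2)\subset\overline{\C_-}$, the determinant $\det A_2=-\frac{\beta}{V}(B_pB_mk_{\text{leak}}+B_pD_m^2+B_mD_p^2)\neq 0$ excludes zero, and the equality case of the dissipation identity rules out purely imaginary eigenvalues. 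Consequently $\|R(is,A_2)\|$ is uniformly bounded on $i\R$ and $\tf_2$ is continuous there.

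The key technical step is to establish a strictly positive lower bound $\re\tf_2(is)\geq\eta(s)>0$ on a suitable set $E$. Impedance passivity gives $\re\tf_2(is)\geq 0$ throughout $i\R$, and $\tf_2(is)\to D_2=B_p+B_m$ as $|s|\to\infty$ yields $\re\tf_2(is)\geq(B_p+B_m)/2$ outside a compact set. Solving $A_2z+B_2u=0$ directly gives $\tf_2(0)=0$, so $0$ must be excluded from $E$. For any $s_0\neq 0$ with $\re\tf_2(is_0)=0$, equality in the dissipation identity applied at $(z_0,u_0)=(R(is_0,A_2)B_2u_0,u_0)$ forces $B_p(z_{0,5}+u_0)=B_m(z_{0,6}-u_0)=k_{\text{leak}}z_{0,7}=0$, and substitution into the resolvent equation $(is_0-A_2)z_0=B_2u_0$ (using the first component when $B_p>0$ or the second when $B_m>0$, together with the third to close the degenerate subcases) leads to $is_0u_0=0$, contradicting $s_0\neq 0$. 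Hence I may take $E=\R\setminus\{0\}$.

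Theorem~\ref{thm:AbsCouplingResEst} then yields $i(\R\setminus\{0\})\subset\rho(\Ahd)$ with $\|R(is,\Ahd)\|\lesssim 1+s^2$ on this set. To close, I verify $0\in\rho(\Ahd)$: by the Sobolev embedding theorem the resolvent of $\Ahd$ is compact, so it suffices to check $\ker\Ahd=\{0\}$. Setting $\Ahd f=0$, the second component gives $f_2\equiv 0$ and hence $f_3=f_4=0$ by the domain conditions; invertibility of $A_2$ then forces $f_5=f_6=f_7=0$; finally $f_1\equiv 0$ follows from $f_1''''=0$ together with $f_1(0)=f_1'(0)=f_1''(1)=f_1'''(1)=0$. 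Continuity of the resolvent on $i\R$ then extends the estimate to a uniform bound $\|R(is,\Ahd)\|\lesssim 1+s^2$. The Arendt--Batty--Lyubich--Vu theorem gives strong stability of mild solutions, and Theorem~\ref{thm:BorTom2010} with $\alpha=2$ yields $\|z(t)\|=o(t^{-1/2})$ for classical solutions, i.e., $E_{\text{hd}}(t)=o(t^{-1})$. The main obstacle is the verification of the strict positivity of $\re\tf_2(is)$ for $s\neq 0$ via the equality-case analysis, since the scalar transfer function necessarily vanishes at $s=0$.
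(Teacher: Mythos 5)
Your proposal follows the paper's proof almost step for step: the same decomposition with $A_1=A_{\text{sc}}$, $B_1=C_1^\ast=(0,0,0,1)^T$, the same $(A_2,B_2,C_2,D_2)$ and dissipation identity, the same choice $K=1$ with Theorem~\ref{thm:torque} supplying $i\R\subset\rho(A_K)$ and $\|R(is,A_K)\|\lesssim 1+s^2$, the same treatment of $s=0$ via compactness of the resolvent and $\ker A_{\text{hd}}=\{0\}$, and the same conclusion via Arendt--Batty--Lyubich--Vu and Theorem~\ref{thm:BorTom2010}. The one substantive difference is the key step, the strict lower bound $\re \mathbb{H}_2(is)\geq\eta(s)>0$ for $s\neq 0$: the paper computes $\re\mathbb{H}_2(is)=s^2n(s)/d(s)$ explicitly and analyzes the biquadratic $n$, whereas you argue abstractly through the equality case of the dissipation identity.

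That step has a genuine gap. Equality forces only $B_p(z_{0,5}+u_0)=B_m(z_{0,6}-u_0)=k_{\text{leak}}z_{0,7}=0$, and in the sub-case $B_m=0$, $k_{\text{leak}}=0$ (permitted by the hypotheses $B_p+B_m>0$ and $k_{\text{leak},p},k_{\text{leak},m}\geq 0$) this yields only $z_{0,5}=-u_0$. The first resolvent equation then gives $D_pz_{0,7}=is_0u_0$, \emph{not} $is_0u_0=0$, and the second and third equations admit a nonzero solution precisely at $s_0^2=\tfrac{\beta}{V}D_m(D_m-D_p)$ when $D_m>D_p$: there one checks that $\iprod{C_2z_0+D_2u_0}{u_0}=(D_p+D_m)z_{0,7}\overline{u_0}$ is purely imaginary, so $\re\mathbb{H}_2(is_0)=0$ with $s_0\neq 0$ and $u_0\neq0$. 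Hence no positive $\eta$ exists on $E=\R\setminus\{0\}$ in this regime and your argument cannot close as stated. (The same boundary case is delicate in the paper's own computation: there $a_1^2-4a_2a_0=0$ exactly in this regime, so $n$ has a double zero at $s_0$, and the necessary condition should involve ``$a_1^2-4a_2a_0\geq 0$'' rather than ``$>0$''.) To repair your proof you must either exclude this regime --- for instance assume $k_{\text{leak}}>0$, or $\min(B_p,B_m)>0$, or $D_p=D_m$ --- or give a separate argument for invertibility and resolvent bounds at the finitely many zeros of $\re\mathbb{H}_2$ on $i\R\setminus\{0\}$; outside that regime your equality-case reasoning does go through.
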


The proof of Theorem~\ref{thm43} is based on Theorem \ref{thm:AbsCouplingResEst} and the observation that if $J=J_G=J_T=1$, then the operator $A_{\text{hd}}$ has the structure

  \begin{align}
\label{eq:Ahd_def}
    A_{\text{hd}}=\begin{pmatrix}
      A_1-B_1D_2C_1&B_1C_2\\
      -B_2C_1&A_2
    \end{pmatrix},
  \end{align}
  where $A_1=A_{\text{sc}}$, with $A_{\text{sc}}$ in \eqref{scolegenerator}, and  $B_1=
    (0,0,0,1)^T$, $C_1= (0,0,0,1)$,
  \begin{align*}
  &A_2=	\begin{pmatrix}
      -B_p&0&-D_p\\
      0&-B_m&{D_m}\\
      \frac{\beta}{V}D_p&-\frac{\beta}{V}D_m&-\frac{\beta}{V}k_\text{leak}
    \end{pmatrix},\quad
  	B_2=\begin{pmatrix}
      {-B_p}&B_m& \frac{\beta}{V}(D_p+D_m)
    \end{pmatrix}^T,\\
    &C_2=\begin{pmatrix}
    {B_p}& -B_m& (D_p+D_m)
    \end{pmatrix},\quad
  	D_2=B_m+B_p.
  \end{align*}
  We define $X_2=\mathbb{C}^3$ equipped with the norm  $\|(z_1,z_2,z_3)^T\|_{X_2}^2=|z_1|^2+|z_2|^2+\frac{V}{\beta}|z_3|^2$. The following lemma describes the properties of  $(A_1,B_1,C_1, D_1)$ and $(A_2,B_2,C_2,D_2)$.
  \begin{lemma} \label{lem:stable}
    Assume that $J=J_G=J_T=1$ and $B_p+B_m>0$. 
\begin{enumerate}
      \item[\textup{(a)}] 
The tuple $(A_1,B_1,C_1, D_1)$ with $D_1=0$ is an impedance passive linear system on $(\mathbb{C},X_1,\mathbb{C})$.
      \item[\textup{(b)}] The tuple $(A_2,B_2,C_2,D_2)$ is an impedance passive linear system  on $(\mathbb{C},X_2,\mathbb{C})$ and $\sigma(A_2)\subset \mathbb{C}_-$.
    \end{enumerate}
  \end{lemma}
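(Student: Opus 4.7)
The two claims are essentially independent and neither requires new machinery; the lemma is a direct verification once the relevant inner-product structures are unpacked. I would therefore treat (a) and (b) separately, reusing the style of argument already employed in the proof of Theorem~\ref{thm:TMD}.

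\textbf{Part (a).} The plan for (a) is to reduce it to the two observations already used in the TMD proof: that $A_1=A_{\text{sc}}$ is skew-adjoint on $X_{\text{sc}}$ (by \cite[Prop.~1.1]{GuoIva2005}), and that $C_1=B_1^{\ast}$ in $\mathcal{L}(X_1,\mathbb{C})$. The latter is a short computation: for $u\in\mathbb{C}$ and $x=(f,g,\xi,\eta)\in X_1$, the weighted inner product~\eqref{eq:Xscnorm} gives
\begin{align*}
\langle B_1 u,x\rangle_{X_1}=J\,u\bar\eta=u\bar\eta=\langle u,C_1 x\rangle_{\mathbb{C}},
\end{align*}
where the crucial step is the assumption $J=1$. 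Together, skew-adjointness of $A_1$, boundedness of $B_1$ and $C_1$, $D_1=0$, and $C_1=B_1^\ast$ yield
\begin{align*}
\re\langle A_1 x+B_1 u,x\rangle_{X_1}=0+\re\langle u,C_1 x\rangle_{\mathbb{C}}=\re\langle C_1 x+D_1 u,u\rangle_{\mathbb{C}},
\end{align*}
verifying Definition~5 (iii), while Stone's theorem supplies the semigroup in (ii). This completes (a).

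\textbf{Part (b).} For (b), the passivity inequality is a direct finite-dimensional calculation. Writing $z=(z_1,z_2,z_3)^T$ and using the weighted norm $\|z\|_{X_2}^2=|z_1|^2+|z_2|^2+\tfrac{V}{\beta}|z_3|^2$, the cross terms involving $D_p(z_3\bar z_1-z_1\bar z_3)$ and $D_m(z_3\bar z_2-z_2\bar z_3)$ are purely imaginary and cancel in the real part, while the $\frac{\beta}{V}$ factor in the third component is absorbed by the weight. After collecting terms, I expect the identity
\begin{align*}
\re\langle A_2 z+B_2 u,z\rangle_{X_2}-\re\langle C_2 z+D_2 u,u\rangle_{\mathbb{C}}
=-B_p|z_1+u|^2-B_m|z_2-u|^2-k_{\text{leak}}|z_3|^2,
\end{align*}
which is manifestly non-positive since $B_p,B_m,k_{\text{leak}}\ge 0$, giving impedance passivity.

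\textbf{Spectrum of $A_2$.} Since $A_2\in\mathbb{C}^{3\times 3}$, dissipativity immediately yields $\sigma(A_2)\subset\overline{\mathbb{C}_-}$, so it suffices to exclude eigenvalues on $i\mathbb{R}$. Setting $u=0$ in the identity above gives $\re\langle A_2 z,z\rangle_{X_2}=-B_p|z_1|^2-B_m|z_2|^2-k_{\text{leak}}|z_3|^2$. If $A_2 z=isz$ for some $s\in\mathbb{R}$, then the left side vanishes, forcing $B_p|z_1|^2+B_m|z_2|^2+k_{\text{leak}}|z_3|^2=0$. Using $B_p+B_m>0$, at least one of $z_1,z_2$ must vanish; substituting back into the eigenvalue equation $A_2 z=isz$ and using the fact that $D_p,D_m>0$ and that $\beta/V>0$ appears in the third row, a short case analysis on whether $B_p$, $B_m$, or $k_{\text{leak}}$ vanish will force the remaining components of $z$ to be zero. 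This gives $\sigma(A_2)\subset\mathbb{C}_-$.

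\textbf{Main obstacle.} The only real place where something could go wrong is the case analysis for the spectrum, since the hypothesis only guarantees $B_p+B_m>0$ while $k_{\text{leak}}\ge 0$ may be zero. The critical subcase is when exactly one of $B_p,B_m$ is positive and $k_{\text{leak}}=0$; here the passivity estimate gives no information on $z_3$, and one must chase the eigenvalue equations through both rows of $A_2$ to eliminate $z_3$ using the positivity of $D_p$ and $D_m$. Everything else is bookkeeping.
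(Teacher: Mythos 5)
Your proposal is correct. Part (a) and the passivity identity in part (b) follow exactly the paper's route (skew-adjointness of $A_1$ from \cite[Prop.~1.1]{GuoIva2005}, $C_1=B_1^\ast$ thanks to $J=1$, and the completion-of-squares identity $-B_p|z_1+u|^2-B_m|z_2-u|^2-k_{\text{leak}}|z_3|^2$). Where you genuinely diverge is the claim $\sigma(A_2)\subset\C_-$: the paper computes $\det(\lambda-A_2)$ explicitly and verifies the Routh--Hurwitz inequality $a_0<a_1a_2$ by a chain of estimates, whereas you use dissipativity of $A_2$ in the weighted inner product to get $\sigma(A_2)\subset\overline{\C_-}$ and then exclude imaginary eigenvalues by observing that $A_2z=isz$ forces $B_p|z_1|^2+B_m|z_2|^2+k_{\text{leak}}|z_3|^2=0$. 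Your case analysis does close: in the critical subcase you flag (say $B_p>0$, $B_m=k_{\text{leak}}=0$), the first row of $A_2z=isz$ gives $D_pz_3=0$ hence $z_3=0$, and then the second row gives $isz_2=0$ while for $s=0$ the third row gives $D_mz_2=0$, so $z=0$ in every branch. Your argument is shorter and avoids the characteristic-polynomial bookkeeping; the paper's Routh--Hurwitz computation is more mechanical but yields the characteristic polynomial explicitly, which is of a piece with its later explicit computation of $\re\mathbb{H}_2(is)$ in the proof of Theorem~\ref{thm43}. Either route is acceptable; if you write yours up, carry out the eigenvector case analysis in full rather than leaving it as ``a short case analysis will force the remaining components to be zero.''
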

  \begin{proof}
  Part (a) follows from the fact that $A_1$ is skew-adjoint (see \cite[Prop. 1.1]{GuoIva2005}) and that $C_1=B_1^*$ and $D_1=0$. To prove part (b), let $z=(z_1,z_2,z_3)^T\in X_2$ and $u\in \mathbb{C}$. We have
\begin{align*}
\langle A_2z+B_2u,z\rangle_{X_2}&=-B_p|z_1|^2-B_m|z_2|^2-k_\text{leak}|z_3|^2-B_pu\overline{z_1}\\
&\quad+B_mu\overline{z_2}+(D_p+D_m)u\overline{z_3},\\
\langle C_2z+D_2u,u\rangle_{\mathbb C} 
&=B_pz_1\overline{u}-B_mz_2\overline{u}+(D_p+D_m)z_3\overline{u}+(B_p+B_m)|u|^2
  \end{align*}
and thus
      \begin{align*}
        \MoveEqLeft[5]\re\langle A_2z+B_2u,z\rangle_{X_2}- \re\langle C_2z+D_2u,u\rangle_{\mathbb C}\\
&= -B_p|z_1+u|^2-B_m|z_2-u|^2-k_\text{leak}|z_3|^2\leq0.
      \end{align*}
Since $X_2$ is finite-dimensional, this implies that $(A_2,B_2,C_2,D_2)$ is an impedance passive linear system on $(\mathbb{C},X_2,\mathbb{C})$. Moreover, for $\lambda\in\mathbb{C}$ we have
   \begin{align*}
  \det(\lambda - A_2)&= 
  \lambda^3 + (D_m^2 + D_p^2 + B_m k_{\text{leak}} + B_p k_{\text{leak}} + B_m B_p) \lambda \\
  &\quad + (B_m + B_p + k_{\text{leak}}) \lambda^2  + B_p D_m^2 + B_m D_p^2 + B_m B_p k_{\text{leak}}.
  \end{align*}
Since $B_p+B_m+k_{\text{leak}}>0$ and $D_p,D_m>0$ we have
 \begin{align*}
    B_p D_m^2 + B_m D_p^2 + B_m B_p k_{\text{leak}}&\leq (B_p  + B_m  +  k_{\text{leak}}) \max\{D_m^2,D_p^2, B_m B_p\}\\
  &\hspace{-1.3cm}< (B_p  + B_m  +  k_{\text{leak}}) (D_m^2+D_p^2+ B_m B_p)\\
  &\hspace{-1.3cm}\leq (B_p  + B_m  +  k_{\text{leak}}) (D_m^2 + D_p^2 + B_m k_{\text{leak}} + B_p k_{\text{leak}} + B_m B_p). 
  \end{align*}
Since $B_p+B_m>0$, $D_p,D_m>0$, and $B_p, k_{\text{leak}}\geq0$, the above estimate and the Routh--Hurwitz criterion imply that $\det(\lambda-A_2)\neq 0$ for all $\gl\in\overline{\C_+}$, and thus
 $\sigma(A_2)\subset \C_-$.
  \end{proof} 
  \begin{proof}[Proof of Theorem~\ref{thm43}]
We have from Lemma~\ref{lem:stable}
that $(A_1,B_1,C_1, D_1)$ with $D_1=0$ and $(A_2,B_2,C_2,D_2)$ are impedance passive linear systems and $\sigma(A_2)\subset \mathbb{C}_-$.
To apply Theorem \ref{thm:AbsCouplingResEst} we will analyse the transfer function $\mathbb H_2$ of $(A_2,B_2,C_2,D_2)$.
    A direct computation shows that
 $\re \mathbb{H}_2(is)= \frac{s^2 n(s)}{d(s)}$ for all $s\in \R\setminus \{0\}$, where
  \begin{align*}
    \MoveEqLeft[1] n(s)=(B_m+B_p)s^4\\
    &+\bigl[(B_mB_p+k_{\text{leak}}^2)(B_m+B_p)+(D_m+D_p)^2k+2(B_mD_p-B_pD_m)(D_m-D_p)\bigr]s^2\\
    &+\left[(B_mD_p^2+B_pD_m^2)(D_m-D_p)^2+B_mB_p(B_m+B_p)k_{\text{leak}}^2\right.\\
    &\quad\left.+\left(2B_mB_p(D_m^2+D_p^2)+(B_mD_p-B_pD_m)^2\right)k_{\text{leak}}\right]
  \end{align*}
  \begin{align*}
    d(s)&= s^6+ \left[B_m^2 + B_p^2 - 2 D_m^2 - 2 D_p^2  + k_{\text{leak}}^2\right] s^4 \\
    &\quad+ \left[B_m^2 B_p^2 - 2 B_m^2 D_p^2 + B_m^2 k_{\text{leak}}^2 + D_p^4 - 2 B_p^2 D_m^2 + B_p^2 k_{\text{leak}}^2 + 2 B_p D_p^2 k_{\text{leak}} \right.\\
    &\quad\left.+ D_m^4 + 2 D_m^2 D_p^2 + 2 B_m D_m^2 k_{\text{leak}} \right]s^2+B_m^2 B_p^2 k_{\text{leak}}^2 + 2 B_m^2 B_p D_p^2 k_{\text{leak}} \\
    & \quad+ B_m^2 D_p^4+ 2 B_m B_p^2 D_m^2 k_{\text{leak}} + 2 B_m B_p D_m^2 D_p^2 + B_p^2 D_m^4.
  \end{align*}
  We want to show that $n(s)\neq 0$ for all $s\neq 0$.
We have 
$n(s)=a_2 s^4 + a_1 s^2 + a_0$, where
$a_2=B_m+B_p>0$ and
  \begin{align*}
    &a_1=(B_mB_p+k_{\text{leak}}^2)(B_m+B_p)+(D_m+D_p)^2k_{\text{leak}}+2(B_mD_p-B_pD_m)(D_m-D_p)\\
    &a_0=(B_mD_p^2+B_pD_m^2)(D_m-D_p)^2+B_mB_p(B_m+B_p)k_{\text{leak}}^2\\
    &\qquad\quad+\left(2B_mB_p(D_m^2+D_p^2)+(B_mD_p-B_pD_m)^2\right)k_{\text{leak}}.
  \end{align*}
We first consider the case where $a_0=0$.
 Since $B_p+B_m>0$ and $D_p,D_m>0$, this implies that $D_m=D_p$ 
and further that $a_1 = (B_mB_p+k_{\text{leak}}^2)(B_m+B_p)+(D_m+D_p)^2k_{\text{leak}}\ge 0$. Since $a_2>0$, $a_1\ge 0$ and $a_0=0$, we have $n(s) = (a_2s^2+a_1)s^2> 0$ for all $s\neq 0$ as claimed.
Assume now that $a_0>0$.
Since $a_0a_2>0$, the Routh--Hurwitz criterion implies that we can have $n(s)=0$ for some $s\neq 0$ 
 only if  $a_1\leq 0 $ and $  a_1^2-4a_2a_0>0.$
  Because of this, we want to show that if $a_1\leq 0$, then necessarily $a_1^2-4a_2a_0\leq 0$. In fact, 
a direct computation shows that
  \begin{align*}
    a_1^2-4a_2a_0=&-2B_mB_p(B_m+B_p)(D_m+D_p)^2k_{\text{leak}}-2(B_m+B_p)(D_m+D_p)^2k_{\text{leak}}^3\\
    &\hspace{-0.8em}-(B_p+B_m)^2(B_mB_p-k_{\text{leak}}^2)^2-(D_m+D_p)^4k_{\text{leak}}^2+2a_1(D_m+D_p)^2k_{\text{leak}}\\
    &\hspace{-0.8em}+2a_1(B_mB_p+k_{\text{leak}}^2)(B_m+B_p)-4B_pB_m(D_p+D_m)^2(D_m-D_p)^2\\
    &\hspace{-0.8em}-4(B_p+B_m)\left[2B_pB_m(D_p^2+D_m^2)+(B_mD_p-B_pD_m)^2\right]k_{\text{leak}}.
  \end{align*}
  This expression shows that if $a_1\leq0$ then $a_1^2-4a_2a_0\leq0$. Hence $n(s)\neq0$ for any $s>0$. 
Since $\sigma(A_2)\subset \C_-$, we must have $d(s)\neq 0$ for $s\neq0$.
Moreover, we have $\re \mathbb{H}_2(is)\to B_p+B_m>0$ as $|s|\to \infty$.
Combining the above properties we conclude that for any $\varepsilon > 0$ there exists $c_\varepsilon > 0$ such that $\re \mathbb{H}_2(is) \geq c_\varepsilon I$ for all $s \in \mathbb{R} \setminus (-\varepsilon, \varepsilon)$. 
  On the other hand, it follows from Theorem \ref{thm:torque} that the operator
  $A_K:=A_1-B_1KC_1$ with domain $D(A_K)=D(A_1)$ where $K=I$ generates a contraction semigroup
$\mathbb T_K$ on $X_1$ and that $\norm{\mathbb T_Kz_0}^2=o(t\inv)$ as $t\to\infty$ for all $z_0\in D(A_1)$.
Theorem \ref{thm:BorTom2010} thus implies that
 $i\mathbb{R}\subset\rho(A_K)$ and $\|R(is,A_K)\|\lesssim 1+s^2$ for all $s\in\mathbb{R}$. Hence, Theorem \ref{thm:AbsCouplingResEst} 
with $E=\{s\in\mathbb R: \; |s|>\varepsilon\}$
 implies that $A_{\text{hd}}$ generates a contraction semigroup on $X_{\text{hd}}$
   such that $\{is\in i\mathbb{R}: \;|s|>\varepsilon\}\subset\rho(A_{\text{hd}})$ and 
that there exists $M_\varepsilon>0$ such that
  \begin{align}\label{xxxx}
    \|R(is,A_{\text{hd}})\|\leq M_\varepsilon (1+s^2), \quad \text{whenever }\; |s|>\varepsilon.
  \end{align}
   Next, we will show  that $0\in\rho(A_{\text{hd}})$.
The Sobolev embedding theorem implies that $A_{\text{hd}}$ has compact resolvents, and therefore it is sufficient to show that $\ker A_{\text{hd}}=\{0\}$.
Let $f=(f_1,f_2,f_3,f_4,f_5,f_6,f_7)^T\in D(A_{\text{hd}})$ be such that $A_{\text{hd}}f=0$. Then, we have
  \begin{align}
    &f_2=0\label{01*}, \qquad
    \left(EI(x)f''_1\right)''=0,
\qquad f_4=f_2'(1), \qquad f_3=f_2(1)
\\
    &\left(EIf''_1\right)'(1)=0,\label{03*}
    \qquad f_1(0)=f_1'(0)=0, \qquad f_2(0)=f_2'(0)=0,
\\
    &-{EI(1)}f''_1(1)+{B_p}f_5-(B_p+B_m)f_4-{B_m}f_6+(D_p+D_m)f_7=0\label{04*}\\
    &-{B_p}f_5+{B_p}f_4-{D_p}f_7=0, \label{05*}
    \qquad {D_m}f_7-{B_m}f_6-{B_m}f_4=0
\\
    &D_pf_5-(D_p+D_m)f_4-D_mf_6-k_{\text{leak}}f_7=0\label{07*}.
  \end{align}
  It follows from \eqref{01*} 
 that $f_3 = f_4 = 0$.  
Moreover, from \eqref{01*}, we deduce that $EI f_1''$ is a second-degree polynomial. Using \eqref{04*} and \eqref{05*}
 we obtain  
$EI(1) f_1''(1) = 0$. Together with \eqref{03*}, this implies that $f_1'' = 0$, since $EI$ is strictly positive.
  Using \eqref{03*} we get that $f_1=0$. Now, using
 \eqref{05*}
 and \eqref{07*}, 
   with $f_4=0$, we get that 
   \begin{align*}
    \begin{pmatrix}
      B_p& 0& D_p\\
      0&-B_m&D_m\\
      D_p&-D_m&-k_{\text{leak}}
    \end{pmatrix}\begin{pmatrix}
      f_5\\f_6\\f_7
    \end{pmatrix}=0.
   \end{align*}
The determinant of the matrix above is 
$B_p B_m k + B_p D_m^2 + B_m D_p^2$ and, since $B_p+B_m > 0$ and $D_p,D_m > 0$, it is nonsingular.
Hence we have $f_5 = f_6 = f_7 = 0$. This completes the proof that $0 \in \rho(A_{\text{hd}})$. 
Now by choosing $\varepsilon>0$ to be
   sufficiently small we can deduce that $i\mathbb{R}\subset \rho(A_{\text{hd}})$ and \eqref{xxxx} implies that $\|R(is,A_{\text{hd}})\|\lesssim {1+s^2}$ for $s\in \R$. 
  The claims therefore follow from the Arendt--Batty--Lyubich--Vu Theorem~\cite[Thm.~5.5.5(b)]{AreBat11book} and Theorem \ref{thm:BorTom2010}.
  \end{proof}

We note that the general approach of applying Theorem~\ref{thm:AbsCouplingResEst} and Theorem~\ref{thm:BorTom2010} to deduce the polynomial stability of the system~\eqref{eq:SCOLEHydro} is also applicable without the assumption that $J=J_G=J_T=1$. Indeed, 
if the parameters of the model $\Sigma_{\text{hd}}$ are fixed and if $A_{\text{hd}}$ can be expressed in the form~\eqref{eq:Ahd_def}, then the analysis of the polynomial stability of~\eqref{eq:SCOLEHydro} again reduces to showing that $(A_1,B_1,C_1,D_1)$ and $(A_2,B_2,C_2,D_2)$ are impedance passive linear systems and to deriving a lower bound for $\re \mathbb H_2(is)$.

We conclude this section by analysing the stability of~\eqref{eq:SCOLEHydro} in the case of more general parameters $J$, $J_G$, and $J_T$. We show that if $J\neq J_G$, then the system can be stabilised strongly with feedback.
More precisely, we assume that the external disturbance $\tau_{\text{rotor}}$ in~\eqref{eq:SCOLEHydro} is zero
and consider the stabilization of $\Sigma_{\text{hd}}$ using the control input $u(t)=\tau_{\text{load}}(t)$. In the stabilization we consider feedback from 
 the measurement
  \begin{align*}
    y(t)=-\frac{1}{J}w_{xt}(1,t)-\frac{1}{J_G}(\Omega_m(t)-w_{xt}(1,t)),
  \end{align*}
which is based on the angular velocity of the nacelle and the angular velocity of the generator motor.
With these choices~\eqref{eq:SCOLEHydro} can be written in the form%
\begin{subequations}
\label{hydraulic}
  \begin{align}
    \dot{z}(t) &= A_{\text{hd}}z(t) + B_{\text{hd}}u(t), \\
y(t) &= B_{\text{hd}}^\ast z(t),
  \end{align}
\end{subequations}
where $B_{\text{hd}}=\bigl(0,0,0,-\frac{1}{J},0,-\frac{1}{J_G},0\bigr)^T $.
The next theorem shows that the this system becomes strongly stable under negative output feedback.
 \begin{theorem}
The operator $A_{\textup{hd}}$ generates a contraction semigroup on $X_{\textup{hd}}$.
    Assume that $J\neq J_G$ and $B_m>0$. The hydraulic wind turbine model $\Sigma_{\text{hd}}$ described by  \eqref{hydraulic} is strongly stabilised by the state feedback $u=-ky$ with $k>0$.
  \end{theorem}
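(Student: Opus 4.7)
My plan is to establish well-posedness via Lumer--Phillips and then apply the Arendt--Batty--Lyubich--Vu theorem to the closed-loop generator $A_{\textup{cl}} = A_{\textup{hd}} - kB_{\textup{hd}}B_{\textup{hd}}^\ast$. For the contraction semigroup claim, I would compute $\re\langle A_{\textup{hd}} f,f\rangle_{X_{\textup{hd}}}$ for $f\in D(A_{\textup{hd}})$: two integrations by parts on the PDE components produce boundary terms at $x=1$ that cancel against the third-component contribution (as in the SCOLE computations underlying Theorems~\ref{thm:torque}--\ref{thm:force}), and the remaining algebraic terms from the $\C^5$-block collapse---paralleling the case $J=J_G=J_T=1$ treated in Lemma~\ref{lem:stable}---into $-B_p|f_5-f_4|^2 - B_m|f_6+f_4|^2 - k_{\textup{leak}}|f_7|^2 \le 0$. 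Dissipativity combined with a range argument for $\gl - A_{\textup{hd}}$ at some $\gl>0$ (via a Lax--Milgram-type argument on the stationary problem) yields the contraction semigroup by Lumer--Phillips. Since $-kB_{\textup{hd}}B_{\textup{hd}}^\ast$ is bounded and dissipative, $A_{\textup{cl}}$ likewise generates a contraction semigroup and inherits compact resolvent from the Sobolev embedding of $D(A_{\textup{hd}})$ into $X_{\textup{hd}}$; the Arendt--Batty--Lyubich--Vu criterion thereby reduces to showing that $A_{\textup{cl}}$ has no eigenvalues on $i\R$.

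Suppose $A_{\textup{cl}} f = isf$ with $s\in\R$ and $f=(f_1,\ldots,f_7)$. Taking real parts of $\langle A_{\textup{cl}}f,f\rangle = is\|f\|^2$ forces every nonpositive term in the dissipation to vanish; the hypothesis $B_m>0$ yields $f_4+f_6=0$, while $k>0$ together with $y = B_{\textup{hd}}^\ast f = -\tfrac{1}{J}f_4-\tfrac{1}{J_G}f_6 = 0$ supplies an independent linear constraint whose $2\times 2$ coefficient matrix has determinant $\tfrac{1}{J_G}-\tfrac{1}{J}\ne 0$ precisely under the hypothesis $J\ne J_G$, forcing $f_4=f_6=0$. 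The sixth and seventh component equations combined with $D_p,D_m>0$ then yield $f_7 = f_5 = 0$; the fourth component equation collapses to $EI(1)f_1''(1)=0$, and for $s\ne 0$ the domain identity $f_4 = is f_1'(1)$ gives $f_1'(1)=0$.

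The main obstacle is then showing $f_1\equiv 0$. For $s=0$ this is an elementary ODE calculation from $(EIf_1'')''=0$ with the boundary conditions $f_1(0)=f_1'(0)=f_1''(1)=(EIf_1'')'(1)=0$. For $s\ne 0$, $f_1$ satisfies the over-determined eigenvalue problem
\begin{equation*}
(EIf_1'')'' = s^2\rho f_1, \quad f_1(0) = f_1'(0) = f_1'(1) = f_1''(1) = 0, \quad (EIf_1'')'(1) = -ms^2 f_1(1),
\end{equation*}
where the fifth condition arises from the third-component equation combined with $f_3 = isf_1(1)$. The first four conditions define a self-adjoint fourth-order spectral problem with discrete eigenvalues $\{s_n^2\}$, and at each such eigenvalue one must show that the corresponding eigenfunction violates the fifth condition. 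In the uniform case, the fundamental basis $\{\cosh(kx),\sinh(kx),\cos(kx),\sin(kx)\}$ together with the first four conditions forces $\sin k = 0$ and hence $k_n = n\pi$, and a short computation shows $(EIf_1'')'(1)$ is proportional to $\sin k_n = 0$ while $f_1(1)\ne 0$, contradicting the fifth condition. For the nonuniform case I would adapt the spectral argument analogously to the proof of Theorem~\ref{thm:force}, possibly invoking results from~\cite{che2003}. Once $f_1 = 0$, all remaining components of $f$ vanish and the Arendt--Batty--Lyubich--Vu theorem delivers the claimed strong stability.
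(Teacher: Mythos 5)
Your overall architecture matches the paper's. The dissipation identity $\re\langle A_{\textup{hd}}f,f\rangle_{X_{\textup{hd}}}=-B_p|f_4-f_5|^2-B_m|f_4+f_6|^2-k_{\textup{leak}}|f_7|^2$ is exactly what the paper computes (via the skew-adjointness of $A_{\textup{sc}}$ and compact resolvent from Sobolev embedding), and your eigenvector analysis on $i\R$ --- $B_m>0$ forces $f_4+f_6=0$, the vanishing of the output together with $J\neq J_G$ forces $f_4=f_6=0$, and the sixth and seventh component equations with $D_m,D_p>0$ then give $f_7=f_5=0$ --- is the paper's argument almost verbatim. (The paper routes the conclusion through \cite[Thm.~14]{BatPho1990} applied to $A_{\textup{hd}}-B_{\textup{hd}}B_{\textup{hd}}^\ast$ rather than through Arendt--Batty--Lyubich--Vu for the closed loop; with compact resolvent these are interchangeable.)

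The genuine gap is in the last step, showing $f_1\equiv 0$. The theorem makes no uniformity assumption on $EI$ and $\rho$. The paper closes this step by observing that the remaining equations reduce to $A_{\textup{sc}}g=i\mu g$ with $g=(f_1,f_2,f_3,0)^T$ and $C_{\textup{sc}}g=0$ for $C_{\textup{sc}}=(0,0,0,1)$, and then invoking the approximate observability of $(C_{\textup{sc}},A_{\textup{sc}})$ from \cite[Cor.~2.2]{GuoIva2005}, which is valid for nonuniform coefficients. Your substitute --- a direct analysis of the over-determined boundary value problem --- is carried out only for constant $EI,\rho$, and your proposed extension ``analogously to the proof of Theorem~\ref{thm:force}, possibly invoking \cite{che2003}'' cannot work as stated, since both of those results are themselves restricted to the uniform case. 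In addition, even your uniform computation misidentifies the mechanism: at $k_n=n\pi$ the eigenfunction of the four-condition problem has $(EIf_1'')'(1)\neq 0$ (it is not proportional to $\sin k_n$); the fifth condition $(EIf_1'')'(1)=-ms^2f_1(1)$ fails because the two sides have opposite signs, not because the left side vanishes. So your uniform conclusion is salvageable after correcting that step, but the nonuniform case is missing, and the clean way to close it is the observability result the paper cites.
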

  \begin{proof}
 We begin by proving that $A_{\text {hd}}$ generates a contraction semigroup on $X_{\text {hd}}$.
  For any $f=(f_1,f_2,f_3,f_4,f_5,f_6,f_7)^T\in D(A_{\text{hd}})$, we have
      \begin{align}
\label{eq:diss}
\MoveEqLeft  \re \left\langle A_{\text{hd}}f,  f\right\rangle_{X_{\text{hd}}}
        =\re \left\langle A_{\text sc}(f_1,f_2,f_3,f_4)^T, (f_1,f_2,f_3,f_4)^T\right\rangle_{X_{\text{sc}}}\notag\\
        &\quad+\re \left(B_pf_5\overline{f_4}-(B_p+B_m)f_4\overline{f_4}-B_mf_6\overline{f_4}-(D_p+D_m)f_7\overline{f_4}\right)\notag\\
      &\quad+\re\left(-B_pf_5\overline{f_5}+B_5f_4\overline{f_5}-D_pf_7\overline{f_5}\right)
      +\re \left(D_mf_7\overline{f_6}-B_mf_6\overline{f_6}-B_mf_4\overline{f_6}\right)\notag\\
      &\quad+\re\left( D_pf_5\overline{f_7}-(D_p+D_m)f_4\overline{f_7}-D_mf_6\overline{f_7}-k_{\text{leak}}f_7\overline{f_7}\right)\notag\\
      &=-B_p|f_4-f_6|^2-B_m|f_6+f_4|^2-k_{\text{leak}}|f_7|^2\leq0,
      \end{align}
      where we have used the fact that the operator $A_{\text{sc}}$ in \eqref{scolegenerator} is skew-adjoint, see \cite[Prop. 1.1]{GuoIva2005}.
  Moreover, the Sobolev embedding theorem imply that the domain $ D(A_{\text{hd}})$ is compactly embedded in $ X_{\text{hd}}$. Consequently, 
      $A_\text{hd}$ has a compact resolvent and, in particular, $\sigma(A_{\text{hd}})\cap i\mathbb{R}$ is countable and $A_{\text {hd}}$ is maximally dissipative. Thus $A_{\text{hd}}$ generates a contraction semigroup
      $X_{\text{hd}}$ by the Lumer--Philips theorem.

    The proof of the strong stability of the semigroup generated by $A_{\text{hd}}-B_{\text{hd}}B_{\text{hd}}^\ast$
 is based on \cite[Thm. 14]{BatPho1990}. 
Let $\mu\in \R$ and $f = (f_1,\ldots,f_7)^T\in D(A_{\text{hd}})$ be such that $A_{\text{hd}}f=i\mu f$ and $B_{\text{hd}}^\ast f=0$.
We will show that $f=0$. 
After this, the strong stability follows immediately from~\cite[Thm. 14]{BatPho1990}.
We have $\re \iprod{\Ahd f}{f}_{\Xhd}=\re (i\mu \norm{f}_{\Xhd}^2) =0$ and thus~\eqref{eq:diss} implies that 
$f_4+f_6=0$.
Since $0=\Bhd^\ast f=-\frac{f_4}{J}-\frac{f_6}{J_G}$ with $J\neq J_G$, we must have $f_4=f_6=0$.
The second last row of the identity $\Ahd f=i\mu f$ implies that 
$f_7=0$, and subsequently the last row implies $f_5=0$.
With $f_4=f_5=f_6=f_7=0$ the identity $\Ahd f=i\mu f$ reduces to 
$A_{\text{sc}}g = i\mu g$, where $g=(f_1,f_2,f_3,0)^T\in D(A_{\text{sc}})$ and where $A_{\text{sc}}$ is as in~\eqref{scolegenerator}.
If we define $C_{\text{sc}}=(0,0,0,1)$, then $(C_{\text{sc}},A_{\text{sc}})$ is approximately observable by~\cite[Cor. 2.2]{GuoIva2005}. Since $C_{\text{sc}}g=0$, 
the approximate observability implies
that $g=0$, which further implies $f=0$.
Since $\Ahd$ has compact resolvents and since $\mu$ and $f$ were arbitrary, we have from~\cite[Thm. 14]{BatPho1990} that the semigroup generated by $\Ahd-\Bhd\Bhd^\ast$ is strongly stable.
\end{proof}

\bibliographystyle{plain}
\bibliography{references}

\end{document}